
\documentclass[11pt]{article}
\parskip 5pt
\parindent 0pt

\usepackage[margin=1in]{geometry}
\usepackage{amsmath,amssymb,amsthm}
\usepackage{tikz}
\usepackage{hyperref}

\usepackage[inline]{enumitem}
\usepackage{float}
\usepackage{xspace,nicefrac}
\usepackage[labelsep=none,format=plain,justification=centering]{caption}

\setlength{\abovedisplayskip}{0.5\baselineskip}
\setlength{\belowdisplayskip}{0.5\baselineskip}
\setlength{\abovedisplayshortskip}{0.25\baselineskip}
\setlength{\belowdisplayshortskip}{0.25\baselineskip}
\setlength{\intextsep}{1.5\baselineskip}
\setlength{\abovecaptionskip}{-0.25\baselineskip}
\setlength{\belowcaptionskip}{-\baselineskip}
\allowdisplaybreaks

\theoremstyle{plain}
\newtheorem{theorem}{Theorem}
\newtheorem*{lemma*}{Lemma}
\newtheorem{lemma}{Lemma}
\newtheorem{remark}{Remark}

\def\dfrac#1#2{\lower0.15ex\hbox{\large$\frac{#1}{#2}$}}

\newcommand{\hide}[1]{}
\newcommand{\ignore}[1]{}


\newcommand{\es}{\emptyset}

\newcommand{\cG}{\mathcal{G}}
\newcommand{\cM}{\mathcal{M}}
\newcommand{\Gnd}[1][n,d]{\cG_{#1}}
\newcommand{\Mnd}[1][n,d]{\cM_{#1}}
\newcommand{\Kd}[1][d+1]{\ensuremath{K_{#1}}\xspace}

\newcommand{\rdown}[1]{\ensuremath{\lfloor #1 \rfloor}}

\definecolor{brown}{cmyk}{0, 0.72, 1, 0.45}
\definecolor{grey}{gray}{0.5}

\newcommand{\ds}{$\Delta$-switch}
\newcommand{\dsp}{$\Delta^+$-switch}
\newcommand{\dsm}{$\Delta^-$-switch}
\newcommand{\Dp}{$\Delta^+$}
\newcommand{\Dm}{$\Delta^-$}
\newcommand{\Nb}{{\rm N}}

\newcommand\Out{{\rm Out}}
\newcommand\In{{\rm In}}
\newcommand\id{{\rm id}}
\newcommand\od{{\rm od}}
\newcommand\ol[1]{\overline{ #1}}
\newcommand\sm{\setminus}
\newcommand\seq{\subseteq}
\newcommand{\dg}{\textrm{d}'}
\newcommand{\bN}{\ol{\Nb}}
\newcommand{\dist}{\textrm{dist}}

\tikzset{ b/.style = { circle 
                     , draw
                     , thick
                     , inner sep = 0pt
                     , fill = black
                     , minimum size = 3.5pt
                     }}
\tikzset{empty/.style={rectangle,draw=none,fill=none}}

\date{27 July 2021}

\title{A triangle process on regular graphs}
\author{%
Colin Cooper\\
{ Department of Informatics}\\
{ Kings College}\\
{ London WC2R 2LS, U.K.}\\
{ \texttt{colin.cooper@kcl.ac.uk}}\\
\and
Martin Dyer\thanks{Work supported by
    EPSRC grant EP/S016562/1,``Sampling in hereditary classes''.}\\
{ School of Computing}\\
{ University of Leeds}\\
{ Leeds LS2 9JT, U.K.}\\
{ \texttt{m.e.dyer@leeds.ac.uk}}\\
\and
Catherine Greenhill\thanks{Work supported by Australian Research Council grant DP190100977.}\\
{ School of Mathematics and Statistics}\\
{ UNSW Sydney}\\
{ NSW 2052, Australia}\\
{ \texttt{c.greenhill@unsw.edu.au}}
}

\begin{document}

\maketitle

\begin{abstract}
Switches are operations which make local changes to the edges of a graph, usually with the aim of preserving the vertex degrees. We study a restricted set of switches, called triangle switches. Each triangle
switch creates or deletes at least one triangle.  Triangle switches can be
used to define Markov chains which generate graphs with a given degree sequence
and with many more triangles (3-cycles) than is typical in a  uniformly random graph
with the same degrees. We show that the set of triangle switches connects
the set of all $d$-regular graphs on $n$ vertices, for any $d\geq 3$.
Hence, any Markov chain which assigns positive probability to all triangle
	switches is irreducible on $\mathcal{G}_{n,d}$ for all $d\geq 3$.
We also investigate this question for 2-regular graphs.
\end{abstract}

\section{Introduction}\label{sec:intro}

Generating graphs at random from given classes and distributions has been the subject of considerable research.
See, for example,~\cite{ABLMO,AK,BKS,CoDyGr07,CDGH,FGMS,GW,KTV,LMM,MES,MS,TikYou}. Generation using Markov chains has been a topic of specific interest in this context, in particular Markov chains based on switches of various types, for example~\cite{AK,CoDyGr07,CDGH,FGMS,KTV,LMM,MES,MS,TikYou}. Switches delete a pair of edges from the graph and insert a different pair on the same four vertices. They have the important property that they preserve the degree sequence of the graph. Thus they are useful for generating regular graphs, or other graphs with a given degree sequence. Markov chains also give a dynamic reconfigurability property, which is useful in applications, for example~\cite{CoDyGr07,FGMS,MS}.
For any such Markov chain, two questions arise. First, can it generate any graph in the chosen class?  (Formally: is the Markov chain irreducible?) Secondly, we might wish to estimate its rate of convergence to the chosen distribution. Formally: what is the mixing time of the chain?

In the applied field of social networks, the existence of triangles (3-cycles) is
seen as an indicator of mutual friendships~\cite{GKM,JGN}.  However, many random graph
models, or processes for producing random graphs, will tend to produce graphs with few
triangles. This is true for any process which generates graphs with a given degree sequence (approximately) uniformly at random, if the degree sequence gives sparse graphs. For example, the expected number of triangles is constant for $d$-regular graphs, when $d$ is constant~\cite{bollobas}.  In this paper, we study a restricted set of switches, called
\emph{triangle switches}, and consider any reversible Markov chain whose transitions
are exactly the triangle switches. We
answer the first question (``is the Markov chain irreducible?'') for such a chain
on the state space of $d$-regular graphs, for any $d$.  Note that the answer to this
question is independent of the probabilities assigned to each triangle switch by the
Markov chain, as it is really a property of the undirected graph underlying the
Markov chain.  Hence we do not specify precise transition probabilities in this paper.

We leave the mixing question for future research, noting only that tight bounds on mixing time seem hard to come by in this setting. The recent paper~\cite{TikYou} is a notable exception.

Triangle switches were introduced in~\cite{CDG19} in the context of cubic graphs.
In~\cite{CDG19}, some Markov chains using triangle switches were defined, with transition probabilities assigned to encourage the formation of triangles. After proving that triangle switch chains are irreducible for 3-regular graphs, an analysis was
given~\cite[Section~4]{CDG19} showing that it is possible to generate (cubic) graphs using this approach which have $\Omega(n)$ triangles in $O(n)$ steps of the Markov chain.

The proofs in~\cite{CDG19} do not easily generalise to regular graphs of arbitrary $d$,
though the main approach in our proof of irreducibility comes from~\cite{CDG19}.
If a component of a $d$-regular graph is a clique (that is, a complete subgraph) then it
must be isomorphic to $K_{d+1}$.  We call such a component a \emph{clique component}.
Our approach is to show that starting from an arbitrary $d$-regular graph,
triangle switches can be used to increase the number of clique components.
Furthermore, we show how to alter the set of vertices in a given clique component
using triangle switches.
After creating as many clique components as possible, there is at most one
additional component $C$, which must satisfy
$d+1 < |C| < 2(d+1)$.  We call such a component a \emph{fragment}.
We prove that triangle switches connect the set
of all fragments on a given vertex set.
In the cubic case, this last step is
simpler as the only possible fragments are $K_{3,3}$ and $\bar{C}_6$.

Our result can be viewed as solving a particular \emph{reconfiguration} problem for regular graphs.  Reconfiguration is a topic of growing interest in discrete mathematics. For an introduction to the topic, and a survey of results, see~\cite{Nim}. We note that reconfiguration problems can be as hard as \textsf{PSPACE}-complete, in general. Our results show that there is a polynomial time algorithm to construct a path of triangle swtiches between any two $d$-regular graphs on $n$ vertices.

The plan of the paper is as follows. In Section~\ref{sec:defs} we collect relevant notation and definitions.
In~Section~\ref{subsec:switches}, we define and review switches and restricted switches, in particular triangle switches, and state our main result,
Theorem~\ref{thm:irreducible}.  For most of the paper we assume that $d\geq 3$.
In Section~\ref{sec:small} we show that the set of all fragments with a given vertex set is connected under triangle switches. In Section~\ref{sec:split}, we show that triangle switches can be used to
create a clique component, starting from any $d$-regular graph with at least $2(d+1)$ vertices.
In Section~\ref{sec:sorting} we show how to relabel the vertices in clique components
using triangle switches, and hencecomplete our proof of irreducibility. Finally, in Section~\ref{sec:d=2}, we consider the irreducibility question for $d$-regular graphs with $d\leq 2$.

\subsection{Definitions, notation and terminology} \label{sec:defs}

The notation $[k]$ will denote the set $\{1,2,\ldots,k\}$, for any integer $k$.
For other graph-theoretic definitions and concepts not given here, see~\cite{West}, for example.

Given a set $V$ of vertices, let $V^{\{2\}}$ be the set of unordered pairs of distinct
elements from $V$.  A graph $G=(V,E)$ on vertex set $V$ has edge set $E\seq V^{\{2\}}$.
We usually denote $|V|$ by $n$. We use the notation $xy$ as a shorthand for the unordered pair $\{x,y\}$, whether or not this pair is an edge.
If $E'\seq E$ and $V'=\{v\in V : v\in e\in E'\}$, then $G'=(V',E')$ is a \emph{subgraph} of $G$. Given any vertex subset $U\subseteq V$, the subgraph $G[U]$ \emph{induced} by~$U$ has vertex set $U$ and edge set $E'=U^{(2)}\cap E$. If $|U|=k$ and $G[U]$ is a $k$-cycle, then we say that $G[U]$ is an \emph{induced $C_k$}.

We will write $G\cong H$ to indicate that graphs $G$ and $H$ are isomorphic.
Given a graph $G=(V,E)$, the \emph{complement} of $G$ is the graph $\ol{G}=(V,\ol{E})$
with $\ol{E}=V^{\{2\}}\sm E$. An edge of $\ol{G}$ will be called a \emph{non-edge} of $G$.

The \emph{distance} $\dist(u,v)$ between two vertices $u$ and $v$ is the number of edges
in a shortest path from $u$ to $v$ in $G$, with $\dist(u,v):=\infty$ if no such
path exists. The maximum distance between two vertices in $G$ is the \emph{diameter} of $G$
and $G$ is connected if it has finite diameter.
The \emph{component} $C$ of $G$ containing $v$ is the
largest connected induced subgraph of $G$ which contains $v$.

Given a graph $G=(V,E)$ and vertex $v\in V$,
let $\Nb_G(v)=\{u:uv\in E\}$ denote the neighbourhood of $v$,
and let $\deg_G(v)=|\Nb_G(v)|$ denote the degree of $v$ in $G$.
The \emph{closed neigbourhood} of $v$ is $\Nb_G[v]:=\Nb_G(v)\cup\{v\}$.
We sometimes drop the subscript and write $\Nb(v)$ or $\Nb[v]$.

Say that $G$ is \emph{regular} if every vertex has the same degree, and
if $\deg_G(v)=d$ for all $v\in V$ then we say that $G$ is $d$-regular.
Let $\Gnd$ be the set of all $d$-regular graphs with vertex set $V=[n]$.
Note that $\Gnd$ is non-empty if and only if either $d$ or $n$ is even.
This result seems to be folklore, but is easy to prove. Necessity is implied by edge counting, and suffiency by a direct construction. An indirect proof can be found in~\cite[Prop.\,1]{TriTy}. As usual, $\Kd\in\Gnd[d+1,d]$ denotes the complete graph on $d+1$ vertices, and $K_{d,d}\in\Gnd[2d,d]$ denotes the complete bipartite graph on $d+d$ vertices. A graph in $\Gnd$ with $d+1<n<2(d+1)$ will be called a \emph{fragment}. Note that $\Kd$ is not a fragment.

We will often regard a graph $G$ as \emph{layered}, in the following way. Let $v$ be a given (fixed) vertex of a $d$-regular $G=(V,E)$, where $n=|V|\geq 2(d+1)$, and let $C\seq V$ determine the component $G[C]$ of $G$ such that $v\in C$. We regard $C$ as partitioned by edge-distance from $v$, with $V_i=\{u\in C:\dist(v,u)=i\}$.
Thus $V_0=\{v\}$ and $V_1=\Nb(v)$, so $|V_0|=1$ and $|V_1|=d$. Since $V_2$ appears frequently in the proof, we will denote $|V_2|$ by $\ell$.
By definition, $C$ is a disjoint union $C=\bigcup_{i\geq0}V_i$, and $|C|=\sum_{i\geq0}|V_i|$. Let $G_i=G[V_i]=(V_i,E_i)$, and note that $G_0=(\{v\},\es)$. Let $\Nb'(u)$ be the neighbourhood of $u\in V_i$ in $G_i$, i.e. $\Nb'(u)=\Nb(u)\cap V_i$, and let $\dg(u)=|\Nb'(u)|$ be the degree of $u$ in $G_i$. We omit explicit reference to $i$ in this notation, since it is implicit from $u\in V_i$.
Given $u\in V_i$, we denote the set of non-neighbours of $u$ in $G_i$ by $\bN'_i(u)$.

We will regard the edges of $G[C]$ from $V_i$ to $V_{i+1}$ as being directed. Then, for $u \in V_i$, $\In(w)$ is the neighbour set of $u$ in $V_{i-1}$ and $\Out(u)$ is the neighbour set of $u$ in $V_{i+1}$. Thus, if $u\in V_i$, $\In(u)=\Nb(u)\cap V_{i-1}$ and $\Out(u)=\Nb(u)\cap V_{i+1}$. Then let $\id(u)=|\In(u)|$ be the in-degree of $u\in V_i$ and  $\od(u)=|\Out(u)|$ the out-degree of $u$. Thus $\Nb(u)=\Nb'(u)+\In(u)+\Out(u)$, and $\dg(u)+\id(u)+\od(u)=d$. In particular, $\dg(v)=\id(v)=0$, and $\od(v)=d$. If $u\in V_1=\Nb(v)$, $\id(u)=1$ and so $\dg(u)+1+\od(u)=d$, and thus $\od(u)=d-1-\dg(u)=|\bN'_1(u)|$.

A pair of vertices $x,y \in V_i$ will be said to be \emph{below} a pair $a,b\in V_{i+1}$ if  $a \in \Out(x)$, $b \in \Out(y)$, and $a,b$ will be said to be \emph{above} $x,y$. Note that, if $a,b\in V_{i+1}$ is not above some pair $x,y\in V_i$, there must be a unique $x \in V_i$ with $a,b\in \Out(x)$. We will be most interested in the case where $i=1$ and $ab\notin E_2$.

\subsection{Switches}\label{subsec:switches}
As described above, an established approach to the generation of graphs with given degrees is to use local edge transformations known as switches. The process is \emph{irreducible} if any graph in the class can be obtained from any other by a sequence of these local transformations. Here we will consider three possibilities for this local transformation.

In a switch, a  pair of edges $xy$, $wz$ of graph $G=(V,E)$ are chosen at random in some fashion, and replaced with the
pair $xw$ $yz$, provided these are currently non-edges. We make no other assumptions about the subgraph $G[\{w,x,y,z\}]$.  See Figure~\ref{fig:switch}. Taylor~\cite{taylor} showed irreducibility of this process. (See also ~\cite[Thm.\,1.3.33]{West}, where switches are called ``2-switches'', and~\cite{LMM} for a more constructive proof.) Cooper, Dyer and Greenhill~\cite{CoDyGr07} showed rapid mixing for regular graphs, and a generalisation to some (relatively sparse) irregular degree sequences was given in~\cite{GS}. Clearly switches preserve vertex degrees, since each vertex in the switch has one edge deleted and one added, and all other vertices are unaffected. Switches can easily be restricted to preserve bipartiteness, by requiring that $\{w,y\}$ (or equivalently $\{x,z\}$) belong to the same side of the bipartition. In fact, switches were first used as the transitions of a Markov chain for bipartite graphs~\cite{KTV}.
\begin{figure}[H]
\begin{center}
\begin{tikzpicture}[scale=0.7]
\draw [thick,-] (4,0) -- (2,0) (2,2) -- (4,2);
\draw [dashed,thick] (4,0)--(4,2) (2,0)--(2,2) ;
\draw [fill] (2,0) circle (0.1);
\draw [fill] (2,2) circle (0.1);
\draw [fill] (4,2) circle (0.1); \draw [fill] (4,0) circle (0.1);
\node [above] at (2,2.1)  {$x$}; \node [below] at (2,-0.1)  {$w$};
\node [above] at (4,2.1)  {$y$}; \node [below] at (4,-0.1)  {$z$};
\draw  (6,1.1)edge[line width=1.5pt,<->](8,1.1);
\draw (7,1.1)node[empty,label=above:switch] {} ;
\begin{scope}[shift={(8.3,0)}]
\draw [thick,-] (2,2) -- (2,0) (4,0) -- (4,2);
\draw [thick,dashed,-] (4,0) -- (2,0) (2,2) -- (4,2);
\draw [fill] (2,0) circle (0.1);
\draw [fill] (2,2) circle (0.1);
\draw [fill] (4,2) circle (0.1); \draw [fill] (4,0) circle (0.1);
\node [above] at (2,2.1)  {$x$}; \node [below] at (2,-0.1)  {$w$};
\node [above] at (4,2.1)  {$y$}; \node [below] at (4,-0.1)  {$z$};
\end{scope}
\end{tikzpicture}
\caption{\;\; A switch}\label{fig:switch}
\end{center}
\end{figure}
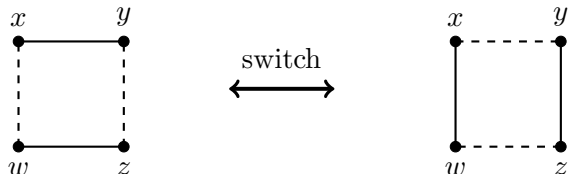
If we wish to generate only \emph{connected} graphs, we may use the \emph{flip}. This is defined in the same way as the switch, except that we specify that $wy$ must also be an edge. See Figure~\ref{fig:flip}. Note that a flip is a restricted form of switch which cannot disconnect the graph. Mahlmann and Schindelhauer~\cite{MS} showed irreducibility of flips, and Cooper, Dyer, Greenhill and Handley~\cite{CDGH} showed rapid mixing for regular graphs of even degree. Note that flips are not well-defined on bipartite graphs, since $\{w,y\}$ clearly cannot be on the same side of a bipartition.
Mahlmann and Schindelhauer also considered other restricted forms of switch,
where there must be a $k$-edge path between $w$ and $y$.  The flip chain
corresponds to $k=1$, while the ``2-flipper'' with $k=2$ preserves
connected bipartite graphs.

Irreducibility of the 2-flipper was proved in ~\cite{MS}, but the idea does not seem to have been considered subsequently.

\begin{figure}[H]
\begin{center}
\begin{tikzpicture}[scale=0.7]
\draw [thick,-] (4,0) -- (2,0) (2,2) -- (4,2) (2,0)--(4,2);
\draw [dashed,thick] (4,0)--(4,2) (2,0)--(2,2) ;
\draw [fill] (2,0) circle (0.1);
\draw [fill] (2,2) circle (0.1);
\draw [fill] (4,2) circle (0.1); \draw [fill] (4,0) circle (0.1);
\node [above] at (2,2.1)  {$x$}; \node [below] at (2,-0.1)  {$w$};
\node [above] at (4,2.1)  {$y$}; \node [below] at (4,-0.1)  {$z$};
\draw  (6,1.1)edge[line width=1.5pt,<->](8,1.1);
\draw (7,1.1)node[empty,label=above:flip] {} ;
\begin{scope}[shift={(8.3,0)}]
\draw [thick,-] (2,2) -- (2,0) (4,0) -- (4,2)  (2,0)--(4,2);
\draw [thick,dashed,-] (4,0) -- (2,0) (2,2) -- (4,2);
\draw [fill] (2,0) circle (0.1);
\draw [fill] (2,2) circle (0.1);
\draw [fill] (4,2) circle (0.1); \draw [fill] (4,0) circle (0.1);
\node [above] at (2,2.1)  {$x$}; \node [below] at (2,-0.1)  {$w$};
\node [above] at (4,2.1)  {$y$}; \node [below] at (4,-0.1)  {$z$};
\end{scope}
\end{tikzpicture}
\caption{: A flip}\label{fig:flip}
\end{center}
\end{figure}

In~\cite{CDG19}, a different restriction of switches was introduced, designed to ensure
that every switch changes the set of triangles in the graph.
The definition is as for switches, except that $x$ and $w$ must have a common
neighbour, which we denote by $v$.
This is a \emph{triangle switch}, which we abbreviate as \ds.
Every \ds\ makes (creates) or breaks (destroys) at least one triangle.
Again, we make no further assumption about $G[\{v,w,x,y,z\}]$.
Clearly, \ds es do not preserve bipartiteness, since bipartite graphs have no triangles.

Specifically, if the 4-edge path $yxvwz$ is present in the graph and the edges
$xw$, $yz$ are absent, the {\em make} triangle switch, denoted \dsp,
deletes the edges $xy$, $wz$ and replaces them with
edges $xw$, $yz$, forming a triangle on $v,x,w$.
The \dsp\ is illustrated in Figure~\ref{fig:fig1}, reading from left to right.
Conversely, if the edge $yz$ and the triangle on $v,x,w$ are present
in the graph, such that the edges $xy$, $wz$ are both absent, then the {\em break} triangle switch, denoted \dsm,
deletes the edges $xw$, $yz$ and replaces them with the edges $xy$, $wz$.
This destroys the triangle on $v,x,w$.
The \dsm\ is illustrated in Figure~\ref{fig:fig1}, reading from right to left. Note that a \dsm\ reverses a \dsp\ and vice versa.
\begin{figure}[H]
\begin{center}
\begin{tikzpicture}[scale=0.75]
\draw [thick,-] (4,0) -- (2,0) -- (0,1) -- (2,2) -- (4,2);
\draw [dashed,thick] (4,0)--(4,2) (2,0)--(2,2) ;
\draw [fill] (0,1) circle (0.1); \draw [fill] (2,0) circle (0.1);
\draw [fill] (2,2) circle (0.1);
\draw [fill] (4,2) circle (0.1); \draw [fill] (4,0) circle (0.1);
\node [left] at (-0.1,1)  {$v$};
\node [above] at (2,2.1)  {$x$}; \node [below] at (2,-0.1)  {$w$};
\node [above] at (4,2.1)  {$y$}; \node [below] at (4,-0.1)  {$z$};
\draw  (6,1.3)edge[line width=1.5pt,->](8,1.3);
\draw (7,1.3)node[empty,label=above:\dsp] {} ;
\draw  (6,0.8)edge[line width=1.5pt,<-](8,0.8);
\draw (7,0.9)node[empty,label=below:\dsm] {} ;
\begin{scope}[shift={(10,0)}]
\draw [thick,-] (0,1) -- (2,2) -- (2,0) -- (0,1) (4,0) -- (4,2);
\draw [thick,dashed,-] (4,0) -- (2,0) (2,2) -- (4,2);
\draw [fill] (0,1) circle (0.1); \draw [fill] (2,0) circle (0.1);
\draw [fill] (2,2) circle (0.1);
\draw [fill] (4,2) circle (0.1); \draw [fill] (4,0) circle (0.1);
\node [left] at (-0.1,1)  {$v$};
\node [above] at (2,2.1)  {$x$}; \node [below] at (2,-0.1)  {$w$};
\node [above] at (4,2.1)  {$y$}; \node [below] at (4,-0.1)  {$z$};
\end{scope}
\end{tikzpicture}
\end{center}
\caption{: The triangle switches}\label{fig:fig1}
\end{figure}
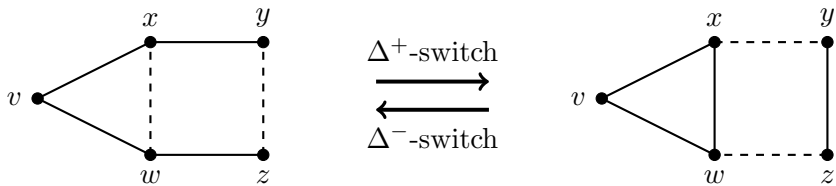
A \ds\ which involves $v$ and two incident edges, as in Figure~\ref{fig:fig1},
will be called a \ds\ at $v$.
Note that this is equivalent to a switch in the graph $H=G[V_1\cup V_2]$, if the graph is layered from $v$, and we will use this equivalence in our arguments.

Let $\Mnd$ be the graph with vertex set $\Gnd$, such that $\{G,G'\}$ an edge if and only if $G'$ can be obtained from $G$ by a single \ds. A time-homogeneous Markov chain with state space $\Mnd$ will be called a
\ds\ chain if its transition matrix $P$ satisfies $P(G,G')>0$ if and only if $\{G,G'\}\in E(\Mnd)$.  That is, $\Mnd$ is the graph underlying any \ds\ chain. Our main result is the following.

\begin{theorem}\label{thm:irreducible}
	Suppose that $d\geq3$.
	Then the graph $\Mnd$ is connected. Equivalently,
	any \ds\ chain is irreducible on $\Gnd$.
\end{theorem}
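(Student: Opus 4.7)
The plan is to establish irreducibility by defining a canonical form and showing that every graph in $\Gnd$ reduces to it via triangle switches. A canonical graph consists of the maximum possible number of clique components $\Kd[d+1]$ together with at most one fragment on the residual vertex set. Given $G,H\in\Gnd$, the strategy has three steps: (i) transform each graph into canonical form, (ii) relabel vertices to make the partitions into cliques and fragment coincide, and (iii) connect the two fragments, if any, using the connectivity of the fragment graph on a fixed vertex set.

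The main engine is the \emph{splitting step} (Section~\ref{sec:split}): any $d$-regular graph with a component of size at least $2(d+1)$ can be transformed via triangle switches into a graph with strictly more clique components. To prove this, I would fix a vertex $v$ in a large component and layer by distance from $v$, writing $V_0=\{v\}$, $V_1=\Nb(v)$, and $V_2=\{u\in C:\dist(v,u)=2\}$. Since a \ds\ at $v$ is equivalent to an ordinary switch in the auxiliary graph $G[V_1\cup V_2]$, I would iteratively apply a \dsp\ whenever $G_1=G[V_1]$ is not yet complete: pick a non-edge $xw$ in $V_1$, find $y\in\Out(x)$ and $z\in\Out(w)$ with $yz\notin E$, and switch to add the edge $xw$ (together with $yz$) at the cost of removing $xy$ and $wz$. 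A counting argument based on $\sum_{u\in V_1}\od(u)=e(V_1,V_2)$, together with the assumption $|C|\geq 2(d+1)$, should ensure that suitable $y,z$ exist whenever $G_1\neq\Kd[d+1]$. Once $G_1$ is complete, $\Nb[v]$ has no edges to $V_2$ and forms a clique component.

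With splitting in hand, Section~\ref{sec:sorting} shows that triangle switches can swap arbitrary vertices between clique components, or between a clique and a fragment; this uses only the fact that each $\Kd[d+1]$ has at least four vertices when $d\geq 3$, leaving room for the auxiliary switch structure. Section~\ref{sec:small} then shows that all fragments on a given vertex set are connected under triangle switches, which is feasible because fragments are dense and the collection of fragments on any fixed vertex set is finite. Assembling the three ingredients proves the theorem: reduce $G$ and $H$ to canonical forms, relabel so that the vertex partitions agree, and connect the fragment parts via the small-case result. The main obstacle is the splitting step, whose counting argument must cover all layered configurations: when $|V_2|=\ell$ is small relative to $\sum_{u\in V_1}\od(u)$, the required pair $y,z$ may be forced to share a neighbour or even coincide; when $G_1$ is nearly complete, the few vertices with positive out-degree may have highly overlapping out-neighbourhoods; and one must verify that every iteration preserves the invariants needed for the next \dsp, so that the clique-count monovariant is genuinely monotone.
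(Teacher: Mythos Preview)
Your high-level architecture matches the paper's exactly: split off clique components, rearrange labels, and handle the residual fragment via Lemma~\ref{small:fragments}. The paper phrases this as induction on $n$ rather than reduction to a canonical form, but the content is the same.

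The genuine gap is in the splitting step. Your proposed move is \emph{forward}: given a non-edge $xw$ in $V_1$, find $y\in\Out(x)$, $z\in\Out(w)$ with $y\neq z$ and $yz\notin E$, then apply the \dsp\ $yxvwz$. You hope a counting argument on $\sum_{u\in V_1}\od(u)$ will guarantee such $y,z$, but this fails in exactly the configurations you flag. For a concrete obstruction with $d=3$: take $V_1=\{x,w,u\}$ with $xu,wu\in E_1$ and $xw\notin E_1$, and suppose $\Out(x)=\Out(w)=\{a\}$ for a single vertex $a\in V_2$. Then no pair $y,z$ exists at all. More generally, whenever $G_2$ restricted to $\Out(x)\cup\Out(w)$ is complete, your switch is blocked, and no simple degree count rules this out.

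The paper's fix is to reverse the direction: instead of starting from a target non-edge in $V_1$, start from a non-edge $ab$ in $G_2$ and use it to insert \emph{some} edge into $E_1$ (Lemmas~\ref{lem:P1}--\ref{lem:P3}). Crucially, Lemma~\ref{lem:P3} lets you reshuffle edges within $G_1$ without decreasing $|E_1|$, so it does not matter which edge gets inserted. This leaves the case where $G_2$ is complete, which the paper handles by a separate multi-step procedure (Lemmas~\ref{L1}--\ref{L3}) that manufactures a non-edge in $G_2$ by pulling vertices in from $V_3$ or breaking an edge of $G_2$ via a triangle in $G[V_2\cup V_3]$. This case analysis on the structure of $G_2$ and $V_3,V_4$ is the real technical content of Section~\ref{sec:split}, and it is not captured by the counting heuristic you sketch.
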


Next we show that \ds es connect the set of all fragments on a given vertex set.

\section{Small regular graphs}\label{sec:small}

We first prove some properties of fragments which will be required later.

\begin{lemma} \label{small:diameter}
Let $G$ be a $d$-regular fragment, with $d\geq 3$.
Then $G$ is connected with diameter~$2$.
\end{lemma}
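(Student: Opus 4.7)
The plan is to exploit the tight vertex bound $d+2 \le n \le 2d+1$ in the definition of a fragment, using a straightforward counting argument for each of the two assertions (connectedness and diameter~$2$).

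First I would prove connectedness. If $G$ were disconnected, every component would itself be a $d$-regular graph, and so would contain at least $d+1$ vertices. Hence $G$ would have at least $2(d+1)$ vertices, contradicting $n < 2(d+1)$. Therefore $G$ is connected.

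Next I would bound the diameter from above by $2$ via a neighbourhood-counting argument. Let $u,v\in V(G)$ be any pair of non-adjacent vertices. Then $u\notin\Nb(u)\cup\{v\}$ and $v\notin\Nb(v)\cup\{u\}$, and since $uv\notin E$ we also have $u\notin\Nb(v)$ and $v\notin\Nb(u)$. Thus the set $\Nb(u)\cup\Nb(v)\cup\{u,v\}$ has size at least $|\Nb(u)|+|\Nb(v)|-|\Nb(u)\cap\Nb(v)|+2=2d+2-|\Nb(u)\cap\Nb(v)|$. Since this set lies inside $V$ and $|V|\le 2d+1$, we must have $|\Nb(u)\cap\Nb(v)|\ge 1$, so $u$ and $v$ share a common neighbour and $\dist(u,v)=2$. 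Combined with connectedness, this gives diameter at most $2$.

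Finally, to rule out diameter $1$, note that diameter $1$ would force $G\cong K_n$, so $d=n-1$; but the fragment condition requires $n>d+1=n$, a contradiction. Hence the diameter is exactly $2$. The only ``obstacle'' here is keeping the counting clean, and in particular remembering that $u,v$ themselves must be excluded from $\Nb(u)\cup\Nb(v)$ when bounding the size of that union; apart from this bookkeeping, the lemma follows immediately from the vertex bound defining a fragment together with $d$-regularity.
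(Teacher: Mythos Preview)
Your proof is correct. Both your argument and the paper's rest on the same counting idea: $d$-regularity together with the bound $n\le 2d+1$ leaves no room for two vertices to be at distance greater than $2$. The paper executes this by layering from a fixed vertex $v$ and observing that if the diameter were at least $3$ then $|V_0|+|V_1|+|V_{r-1}|+|V_r|\ge 2(d+1)$; you instead take two non-adjacent vertices and use inclusion--exclusion on their neighbourhoods to force a common neighbour directly. Your version is marginally more direct (no contradiction needed for the upper bound), while the paper's layered viewpoint meshes naturally with the BFS framework used throughout Sections~\ref{sec:split} onward. The connectedness and diameter-exactly-$2$ steps are identical in both proofs.
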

\begin{proof}
	By definition of fragment, we know that
	$G\in\Gnd$ for some $n$ with $d+1 < n < 2(d+1)$.
	If $G$ is not
	connected then it must have a component with at most $d$ vertices.
	This is a contradiction, as $G$ is $d$-regular.
Choose any vertex $v\in V$, and partition $V$ by edge-distance from $v$. Thus $V_0=\{v\}$ and $V_1=\Nb(v)$. Suppose the diameter is $r\geq 3$, and let $u\in V_r$. Then $u$ has $d$ neighbours in $V_{r-1}\cup V_r$ and hence $|V_{r-1}|+|V_r|\geq d+1$. Thus $n\geq |V_0|+|V_1|+|V_{r-1}|+|V_r|\geq 1+d + d +1=2(d+1)$, a contradiction. Thus $V_3=\es$, so $G$ has diameter at most 2.  On the other hand, if $V_2=\es$ then
$G\cong K_d$, and $n=d+1$, again a contradiction. Hence $G$ has diameter exactly 2.
\end{proof}

\begin{remark}\label{rem:small}
We prove only connectedness, but fragments have higher connectivity. It is not difficult to prove 2-connectedness. For each $d$, we have examples with connectivity only $\rdown{d/2}+1$, and we believe this represents the lowest connectivity. However, since we make no use of this, we do not pursue it further here.
\end{remark}

For an even integer $d\geq 2$, we construct the graph $T_{d,d,1}$ as follows. Take a copy of $K_{d,d}$ with vertex bipartition $(A_d,B_d)$, where $A_d=\{a_i:i\in [d]\}$ and $B_d=\{b_i:i\in [d]\}$. Let $M$ be the matching $\{a_ib_i\in d/2\}$ of size $d/2$ between $A_{d/2}$ and $B_{d/2}$.
Form $T_{d,d,1}$ from the copy of $K_{d,d}$ by deleting the edges of $M$,
adding a new vertex $v$ and an edge from $v$ to each $a_i$ and $b_i$ with $i\in [d/2]$.
Then $T_{d,d,1}$ is a $d$-regular graph tripartite graph with $2d+1$ vertices and
vertex tripartition $\{v\}\cup A_d \cup B_d$.

For example, $T_{2,2,1}$ is a 5-cycle and $T_{4,4,1}$ is shown in Figure~\ref{fig:T441}.
\begin{figure}[H]
\begin{center}
\begin{tikzpicture}[scale=1.5,inner sep=0pt]
\foreach \n in {1,2,3,4} {\node (a\n) [b] at (\n,1) {};}
\foreach \n in {1,2,3,4} {\node (b\n) [b] at (\n,0) {};}
\foreach \n in {1,2,3,4} \foreach \x in {3,4}
{\draw (a\n)--(b\x);} \foreach \x in {1,2,3,4} \foreach \n in {3,4}
{\draw (a\n)--(b\x);} \draw (a2)--(b1) (a1)--(b2);
\node (v) [b] at (0,0.5) {}; \draw (a1)--(v)--(a2) (b1)--(v)--(b2);
	\node [above] at (1,1.1) {$b_1$}; \node [above] at (2,1.1) {$b_2$};
	\node [above] at (3,1.1) {$b_3$}; \node [above] at (4,1.1) {$b_4$};
	\node [below] at (1,-0.1) {$a_1$}; \node [below] at (2,-0.1) {$a_2$};
	\node [below] at (3,-0.1) {$a_3$}; \node [below] at (4,-0.1) {$a_4$};
	\node [left] at (-0.1,0.5) {$v$};
\end{tikzpicture}
\end{center}\vspace{1ex}
  \caption{: The graph $T_{4,4,1}$}\label{fig:T441}
\end{figure}
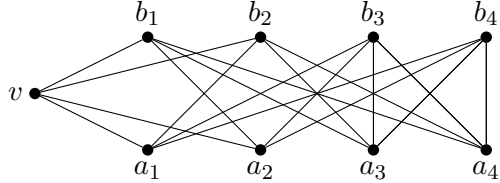
\begin{lemma}\label{small:structure}
	Suppose that $G\in\Gnd$ where $d\geq 3$ and $d+1 < n < 2(d+1)$.
	Let $ab$ be an edge of $G$.
\begin{enumerate}[topsep=0pt,itemsep=0pt]
 \item[(i)] If $n<2d$ then $G$ has a triangle which contains the edge $ab$.
 \item[(ii)] If $n=2d$ then  $ab$ is contained in a triangle or an induced $C_4$
	 in $G$.
	 Furthermore, if $G$ is triangle-free then $G\cong K_{d,d}$.
 \item[(iii)] If $n=2d+1$ then $ab$ is contained in a triangle or an induced $C_4$ in
	 $G$.
	 Furthermore, if $G$ is triangle-free then  $G\cong T_{d,d,1}$.
\end{enumerate}
\end{lemma}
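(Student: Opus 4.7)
The plan is to fix any edge $ab\in E$, set $A=\Nb(a)\sm\{b\}$ and $B=\Nb(b)\sm\{a\}$ (each of size $d-1$), and argue via inclusion--exclusion on $A\cup B\cup\{a,b\}$. If $A\cap B\neq\es$, a common neighbour of $a,b$ gives a triangle on $ab$; otherwise $|\{a,b\}\cup A\cup B|=2d$, which already contradicts $n<2d$ in case~(i), proving~(i).

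For the $C_4$ claims in~(ii) and~(iii), suppose $ab$ lies in no triangle, so $A\cap B=\es$. It suffices to exhibit $u\in A$ and $w\in B$ with $uw\in E$, because $au,bw\in E$ and the diagonals $aw,bu$ are forced to be non-edges (else $abw$ or $abu$ would be a triangle on $ab$), so $a,b,w,u$ is an induced $C_4$. In case~(ii) this is immediate: $u\in A$ is not adjacent to $b$, so its $d-1$ non-$a$ neighbours lie in $(A\sm\{u\})\cup B$, a set in which $|A\sm\{u\}|=d-2$, so pigeonhole forces some neighbour into $B$. In case~(iii), let $v$ be the unique vertex of $V\sm(\{a,b\}\cup A\cup B)$; if no $u\in A$ had a neighbour in $B$ then each $u\in A$ would need its $d-1$ non-$a$ neighbours inside $(A\sm\{u\})\cup\{v\}$, a set of size $d-1$, forcing $u\sim v$ and $u$ adjacent to every other vertex of $A$. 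By symmetry the same would hold for $B$, giving $\deg(v)\geq 2(d-1)>d$ for $d\geq3$, a contradiction.

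Now assume $G$ is triangle-free and consider any edge $ab$. In case~(ii), each $u\in A$ has its $d-1$ non-$a$ neighbours forced into $B$ (not into $A$, since $A\seq\Nb(a)$; not at $b$, since $ab\in E$), so $u$ is adjacent to every vertex of $B$. Thus $(A\cup\{b\},\,B\cup\{a\})$ is a bipartition realising all $d^2$ cross-edges and no internal edges, so $G\cong K_{d,d}$.

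In case~(iii), the parity of $d(2d+1)$ forces $d$ to be even, and since $v\notin\Nb(a)\cup\Nb(b)$ we have $v\not\sim a,b$. Each $u\in A$ has $d-1$ non-$a$ neighbours in $B\cup\{v\}$, a set of size $d$, hence is non-adjacent to exactly one element there; symmetrically for $w\in B$. Set $A_1=\Nb(v)\cap A$ and $B_1=\Nb(v)\cap B$. Double counting edges between $A$ and $B$ yields $|A_1|=|B_1|$, and $\deg(v)=d$ forces $|A_1|=|B_1|=d/2$. The step requiring most care is pinning down the non-edges between $A$ and $B$: a vertex in $A\sm A_1$ is non-adjacent only to $v$ among $B\cup\{v\}$ and so is complete to $B$ (and symmetrically $B\sm B_1$ is complete to $A$), so every non-edge between $A$ and $B$ has both endpoints in $A_1\cup B_1$. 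Since each vertex of $A_1$ contributes exactly one such non-edge, they form a perfect matching $\phi\colon A_1\to B_1$. Under the bipartition $(A\cup\{b\},\,B\cup\{a\})$ the graph is then $K_{d,d}$ minus the matching $\phi$, with $v$ attached to the $d$ endpoints of $\phi$, which is precisely $T_{d,d,1}$.
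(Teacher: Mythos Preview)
Your proof is correct and follows essentially the same approach as the paper's: both arguments fix the edge $ab$, set $A=\Nb(a)\sm\{b\}$ and $B=\Nb(b)\sm\{a\}$, use $A\cap B\neq\es$ versus $|\{a,b\}\cup A\cup B|=2d$ for part~(i), locate an $A$--$B$ edge to get the induced $C_4$ in~(ii) and~(iii), and then carry out the same double-count ($|A_1|=|B_1|=d/2$ via edge-counting between $A$ and $B$) for the triangle-free characterisations. Your handling of the first claim in~(iii) --- deriving $\deg(v)\geq 2(d-1)>d$ from the hypothesis that $A$ has no edge to $B$ --- is in fact crisper than the paper's version, and your explicit verification that the $A$--$B$ non-edges form a perfect matching between $A_1$ and $B_1$ makes the identification with $T_{d,d,1}$ cleaner.
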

\begin{proof}
 Let $A_{d-1}=\{a_i:i\in [d-1]\}=\Nb(a)\sm \{b\}$ and $B_{d-1}=\{b_i:i\in [d-1]\}=\Nb(b)\sm a$.
\begin{enumerate}[topsep=0pt,itemsep=0pt]
 \item[(i)]
 If $n<2(d-1)+2=2d$ then there exists $c\in A_{d-1}\cap B_{d-1}$.
 Hence $G$ has a triangle on the vertices $a,b,c$, which includes the edge $ab$.
 \item[(ii)] If $n= 2d$ and $a,b$ have a common neighbour, then the situation is as in (i). So suppose $A_{d-1}\cap B_{d-1}=\es$.
 Since $|V|=2d$, it follows that $V=A_{d-1}\cup B_{d-1}\cup\{a,b\}$. Now, since all vertices have degree $d$, each $a_i$ in $A_{d-1}$ must have at least one edge to a vertex in $B_{d-1}\cup\{b\}$. If some $a_i$ is adjacent to $b$ then $G$ has a triangle on the vertices $a,b,a_i$. If $a_i$ is not adjacent to $b$, but $a_ib_j$ is an edge of $G$ for some $j$,  then $abb_ja_ia$ is an $C_4$ unless $b_ja$ is an edge.  But if $ab_j$ is an edge then $G$ has a triangle on $a,b,b_j$.

 Now suppose that $G$ is triangle-free. Then $G[A_{d-1}]$ and $G[B_{d-1}]$ are independent sets.
Each $a_i\in A_{d-1}$ has $d-1$ neighbours other than $a$, and these neighbours
		must be the $d-1$ elements of $B_{d-1}$, as $G$ is triangle-free.
		Hence $G[A_{d-1}\cup B_{d-1}]\cong K_{d-1,d-1}$. Now, relabelling $a$ as $b_d$ and $b$ as $a_d$, we see that $G[A_d\cup B_d]\cong K_{d,d}$.
\item[(iii)] If $n=2d+1$ then $d$ must be even (or else $\cG_{2d+1,d}=\es$).
	The situation is similar to that of (ii), except now
	$V = A_{d-1}\cup B_{d-1} \cup \{a,b,c\}$ for some additional vertex $c$.
Again, each vertex in $A_{d-1}$ has $d-1$ neighbours other than $a$, and these
neighbours all lie in $B_{d-1}\cup \{c\}$.  The same is true with the roles of
$A_{d-1}$ and $B_{d-1}$ reversed.  But vertex $c$ has degree $d$, so there must be
at least $d-2 > 0$ edges from vertices in $A_{d-1}$ to $B_{d-1}\cup\{b\}$.
	The argument is then as in (ii).

    Now suppose that $G$ is triangle-free.
    Then $G[A_{d-1}]$, $G[B_{d-1}]$ are independent sets.
    Suppose that $c$ has $d_A$ neighbours in $A_{d-1}$ and $d_B = d-d_A$ neighbours
    in $B_{d-1}$.
    Again, let $A_d = A_{d-1}\cup \{b\}$ and $B_d = B_{d-1}\cup\{a\}$, and consider the bipartite graph $G[A_d\cup B_d]$. This graph has $d^2-d_A$ edges incident with $A_d$, and $d^2-d_B$ edges incident with $B_d$. But these must be equal, so $d_A=d_B=d/2$.
   Every vertex in $A_d$ which is not adjacent to $c$ must be adjacent to every
    vertex in $B_d$, while vertices in $A_d$ which are adjacent to $c$ must be
    adjacent to every vertex in $B_d$ except one. The same statement holds
    with the roles of $A_d$ and $B_d$ reversed.
    Hence $G\cong T_{d,d,1}$, completing the proof.\qedhere
\end{enumerate}
\end{proof}
We can strengthen this as follows.
\begin{lemma}\label{small:triangles}
Suppose $d\geq3$ and $G\in\Gnd$ is a fragment. If $ab$ is any edge in $G$ then, after at most one \ds, there is a triangle in $G'$ with $a$ as one of its vertices. Moreover, this \ds\ does not change edges incident with $b$.
\end{lemma}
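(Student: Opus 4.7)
The plan is as follows. If $a$ already lies in a triangle of $G$, set $G'=G$ (zero switches needed). Otherwise $\Nb(a)$ is an independent set (else an edge inside $\Nb(a)$ would give a triangle at $a$), and I will perform one \dsp\ centred at $v=a$ using two neighbours $x,w\in\Nb(a)\sm\{b\}$; such $x,w$ exist because $d\ge 3$, they satisfy $xw\notin E$, and the switch will add $xw$, creating the triangle $\{a,x,w\}$. Note that $bx,bw\notin E$ (else $\{a,b,x\}$ or $\{a,b,w\}$ would already be a triangle at $a$), so every vertex of $(\Nb(x)\cup\Nb(w))\sm\{a\}$ is different from $b$; consequently the switch does not change any edge incident with $b$.

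It remains to find $y\in\Nb(x)\sm\{a\}$ and $z\in\Nb(w)\sm\{a\}$ with $y\ne z$ and $yz\notin E$. By Lemma~\ref{small:diameter} the graph $G$ has diameter two, so layering from $a$ gives $V=\{a\}\cup V_1\cup V_2$ with $|V_2|=n-d-1$; the $d(d-1)$ edges from $V_1$ to $V_2$ are at most $d|V_2|$, which forces $n\in\{2d,2d+1\}$. When $n=2d$ the count is tight, so $V_1$ is completely joined to $V_2$, each $V_2$-vertex spends all $d$ of its degrees on $V_1$, and hence $V_2$ is an independent set of size $d-1\ge 2$; any distinct pair $y,z\in V_2$ completes the switch.

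The interesting case is $n=2d+1$ (forcing $d$ even, $d\ge 4$). Label $V_1=\{a_1,\ldots,a_d\}$ with $a_1=b$; each $a_i$ has a unique non-neighbour $y_i\in V_2$. Setting $m_y=|\{i:y_i=y\}|$ gives $\sum_{y\in V_2}m_y=d$ and shows that each $y\in V_2$ has $V_2$-internal degree exactly $m_y$, so $V_2$ has $d/2$ internal edges and $d(d-2)/2>0$ non-edges. Fix any non-edge $yz$ of $V_2$; the search for distinct $i,j\in\{2,\ldots,d\}$ with $y_i\ne y$ and $y_j\ne z$ (which would yield the switch with $x=a_i$, $w=a_j$) can fail only if $m_y\ge d-1$, $m_z\ge d-1$, or $m_y,m_z\ge d-2$. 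The first two cases would force a $V_2$-vertex to be adjacent to every other $V_2$-vertex, giving $\sum_y m_y\ge 2(d-1)>d$. The third forces $d=4$ with degree sequence $(2,2,0,0)$ on $V_2$, which is not graphical. Hence a valid \dsp\ always exists, and this final degree-sequence bookkeeping for $n=2d+1$ is the main technical obstacle.
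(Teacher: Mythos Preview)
Your proof is correct but takes a different route from the paper's. The paper leans on Lemma~\ref{small:structure}: the edge $ab$ lies in a triangle or in an induced $C_4$ of the form $abb_ja_ia$, and in the latter case one picks a further neighbour $a_k$ of $a$; either $a_ia_k$ is already an edge (giving a triangle $a,a_i,a_k$), or the \dsp\ on the path $a_kabb_ja_i$ (centred at $b$) inserts $ab_j$ to create the triangle $a,b,b_j$. This is a five-line argument because all the structural work has already been done in Lemma~\ref{small:structure}.

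Your argument instead layers from $a$, observes that $\Nb(a)$ is independent, and performs a \dsp\ centred at $a$ using two neighbours $x,w\in\Nb(a)\sm\{b\}$, creating the triangle $a,x,w$. This avoids Lemma~\ref{small:structure} entirely (you use only the diameter-$2$ statement of Lemma~\ref{small:diameter}), at the cost of re-deriving the bipartite structure in the $n=2d$ case and doing a nontrivial degree-count in the $n=2d+1$ case to locate a suitable non-edge $yz$ in $V_2$ together with admissible $x,w$. The counting is sound: the failure trichotomy is exactly Hall's condition for a system of distinct representatives of $(S_y,S_z)$, and each branch leads to a contradiction (in the first two, more directly because $m_y=d-1$ would make $y$ adjacent to $z$). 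The paper's approach is shorter and produces a triangle containing the edge $ab$ itself; yours is more self-contained and produces a triangle at $a$ avoiding $b$. Both satisfy the lemma as stated.
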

\begin{proof}
	We use the notation and proof of Lemma~\ref{small:structure}. There is nothing to prove in case (i). In cases (ii) and (iii), there is nothing to prove unless
$abb_ja_ia$ is an induced $C_4$. In this situation, choose
$k\in [d-1]\setminus \{i\}$,
which is possible since $d\geq 3$. We know that $ab_j$ is a non-edge, 
as the 4-cycle is induced.
If $a_ia_k\in E$ then there is a triangle on the vertices $a,a_i,a_k$.
Otherwise, there is a \dsp\ on the 4-edge path $a_kabb_ja_i$ which creates a triangle on the vertices $a,b,b_j$, as shown in Figure~\ref{fig:smalltriangles}.
\begin{figure}[H]
\begin{center}
\begin{tikzpicture}[scale=1.5,inner sep=0pt]
\node (ai) [b,label=left:$a_i\,\,$] at  (1.5,1) {};
\node (ak) [b,label=left:$a_k\,$] at  (0,0) {};
\node (a) [b,label=below:$\strut a$] at  (1.5,0) {};
\node (bj) [b,label=right:$\,b_j$] at  (3,1) {};
\node (b) [b,label=below:$\strut b$] at  (3,0) {};
\draw (ak)--(a)--(ai) (b)--(bj) (a)--(b) (ai)--(bj);
\draw[dashed] (a)--(bj) (ai)--(ak) ;
\node at (4.5,0.75) {\Dp} ; 
\draw  (4.0,0.5)edge[line width=1.2pt, ->](5.0,0.5);
\begin{scope}[xshift=6cm]
\node (ai) [b,label=left:$a_i\,\,$] at  (1.5,1) {};
\node (ak) [b,label=left:$a_k\,$] at  (0,0) {};
\node (a) [b,label=below:$\strut a$] at  (1.5,0) {};
\node (bj) [b,label=right:$\,b_j$] at  (3,1) {};
\node (b) [b,label=below:$\strut b$] at  (3,0) {};
\draw (a)--(ai) (b)--(bj) (a)--(b) (a)--(bj) (ai)--(ak);
\end{scope}
\end{tikzpicture}
\end{center}\vspace{1ex}
\caption{: The \ds\ in Lemma~\ref{small:triangles}}\label{fig:smalltriangles}
\end{figure}
Note that the edges adjacent to $b$ are unaffected by the \dsp.
\end{proof}

\begin{lemma} \label{small:fragments}
If $d\geq3$ and $d+1<n<2(d+1)$ then $\Mnd$ is connected.
Equivalently, the \ds\ chain is irreducible on the set of fragments with a given
vertex set.
\end{lemma}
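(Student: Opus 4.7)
The plan is to reduce the claim to a simulation argument via Taylor's theorem, cited above, which guarantees that $\cG_{n,d}$ is connected under ordinary (unrestricted) switches. When $d+1 < n < 2(d+1)$, any $G\in\cG_{n,d}$ is automatically a fragment: every component has at least $d+1$ vertices, and two such components would need at least $2(d+1)>n$ vertices, so $G$ is connected; and $n>d+1$ rules out $G\cong K_{d+1}$. Hence $\cG_{n,d}$ already coincides with the set of fragments on $[n]$, and it suffices to show that every regular switch $xy, wz \to xw, yz$ in a fragment can be realised by a short sequence of triangle switches.

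The easy case is when $x$ and $w$ share a common neighbour $v\notin\{y,z\}$: the regular switch is itself a \ds\ with $v$ closing the new triangle on $v,x,w$. By Lemma~\ref{small:diameter} the graph has diameter $2$, and a standard inclusion-exclusion (using $xw\notin E$) gives $|\Nb(x)\cap \Nb(w)|\geq 2d-n+2$. When $n\leq 2d-1$ this quantity is at least $3$, so a suitable $v$ always exists and every regular switch is already a \ds, settling the sub-case $n<2d$.

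For the remaining cases $n\in\{2d,2d+1\}$ the common neighbourhood of $x$ and $w$ could lie entirely inside $\{y,z\}$. The strategy here is to perform one or two preparatory triangle switches that install a fresh common neighbour of $x$ and $w$ outside $\{y,z\}$, and then finish as in the easy case. Since $d\geq 3$ one can pick $a\in \Nb(x)\sm\{y,z,w\}$ and $b\in \Nb(w)\sm\{y,z,x\}$; by the common-neighbour constraint these are distinct. The structural rigidity provided by Lemma~\ref{small:structure} (triangle-free fragments are exactly $K_{d,d}$ or $T_{d,d,1}$, and otherwise every edge lies in a triangle or an induced $C_4$) then permits an explicit preparatory \dsp, typically using $y$ or $z$ as the triangle-closing vertex, that installs an edge such as $aw$ and so promotes $a$ to a common neighbour of $x$ and $w$. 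Lemma~\ref{small:triangles} can be invoked to create a triangle at a prescribed endpoint if an intermediate step is needed.

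The main obstacle is the bookkeeping in the edge cases $n\in\{2d,2d+1\}$: the preparatory \ds\ must simultaneously satisfy the five-vertex distinctness and all non-edge conditions, and one must verify that at least one suitable preparatory switch exists in every sub-configuration. The key enabling facts are (i) the very rigid structure of triangle-free fragments from Lemma~\ref{small:structure}, (ii) the abundance of non-neighbours available to any vertex (since $n-1-d\geq d\geq 3$), and (iii) Lemma~\ref{small:triangles}, which guarantees a triangle at any chosen vertex using at most one \ds\ that leaves a prescribed incident edge intact. A finite case analysis then yields a simulation of an arbitrary regular switch by $O(1)$ triangle switches, and chaining such simulations with Taylor's theorem completes the proof that $\cM_{n,d}$ is connected.
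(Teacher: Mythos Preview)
Your reduction is the same as the paper's at the top level --- simulate an already-irreducible family of moves by short sequences of \ds es --- but you pick a different family. You invoke Taylor's theorem and aim to simulate \emph{arbitrary} switches; the paper instead invokes irreducibility of the \emph{flip} chain on connected regular graphs and simulates flips. This is not cosmetic. A flip carries a guaranteed extra edge $v_1v_4$, so the four flip vertices already induce at least a three-edge path, and in type~(a) even a $C_4$. That extra edge is precisely what powers the paper's case analysis for $n\in\{2d,2d+1\}$: it forces $|N_1|=|N_4|=d-2$ for the external neighbourhoods $N_i=\Nb(v_i)\setminus S$, and the resulting counting locates a usable common neighbour (or, in the single stubborn sub-case at $n=2d+1$, pins down a specific six-vertex configuration handled by three explicit \ds es).

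Your treatment of $n<2d$ is correct and essentially matches the paper. The gap is at $n\in\{2d,2d+1\}$: you only assert that ``a finite case analysis then yields a simulation'', and the preparatory-switch idea is not made precise. In particular you do not explain how, after a preparatory \ds\ that alters edges, the original switch $xy,wz\to xw,yz$ remains available, nor how the extra changes are undone so that the net effect is exactly that switch. Without the flip edge the four switch vertices may induce nothing more than two disjoint edges, so the neighbourhood counts are weaker and more sub-cases arise than your sketch suggests. The cleanest fix is to reduce via flips rather than unrestricted switches --- Lemma~\ref{small:diameter} already gives you that fragments are connected, so flip irreducibility applies --- after which the paper's explicit one-, two- and three-\ds\ simulations of a flip go through.
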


\begin{proof}
It is known that the \emph{flip} chain~\cite{CDGH,MS} is irreducible on all $d$-regular connected graphs.  Furthermore,
Lemma~\ref{small:diameter} proves that all fragments are connected.
We will show that, if $d+1 < n<2(d+1)$ then a flip can be performed using at most three $\Delta$-switches. That is, if $G$ is a fragment and $G'$ is obtained from $G$ by a flip,
then there is a sequence of at most three \ds es which takes $G$ to $G'$.
The lemma then follows immediately.

We must consider the two possible types of flip, as shown in Figure~\ref{fig:fliptypes}. Here solid lines represent edges of $G$ and dashed lines represent the two non-edges to be inserted by the \ds.
We will always assume we wish to perform the flip which deletes the edges $v_1v_2$, $v_3v_4$  and inserts the edges $v_1v_3$, $v_2v_4$, and the edge $v_1v_4$ is present.
\begin{figure}
\begin{center}
\begin{tikzpicture}[scale=0.74,inner sep=0pt]
\begin{scope}[shift={(0,0)}]
\node (v1) [b,label=above left:$v_1$] at (0,2) {};
\node (v2) [b,label=above right:$v_2$] at (2,2) {} ;
\node (v3) [b,label=below left:$v_3$] at (0,0)  {};
\node (v4) [b,label=below right:$v_4$] at (2,0)  {};
\draw [thick,-] (v1) -- (v2) (v2) -- (v3) (v3) -- (v4) (v1) -- (v4);
\draw [thick,dashed] (v1) -- (v3) (v2) -- (v4) ;
\node at (1,-1) {(a)};
\end{scope}
\begin{scope}[shift={(7,0)}]
\node (v1) [b,label=above left:$v_1$] at (0,2) {};
\node (v2) [b,label=above right:$v_2$] at (2,2) {} ;
\node (v3) [b,label=below left:$v_3$] at (0,0)  {};
\node (v4) [b,label=below right:$v_4$] at (2,0)  {};
\draw [thick,-] (v1) -- (v2) (v3) -- (v4) (v1) -- (v4) ;
\draw [thick,dashed] (v1) -- (v3) (v2) -- (v4) ;
\node at (1,-1) {(b)};
\end{scope}
\end{tikzpicture}
\end{center}\vspace{1ex}
\caption{: The two types of flip}\label{fig:fliptypes}
\end{figure}
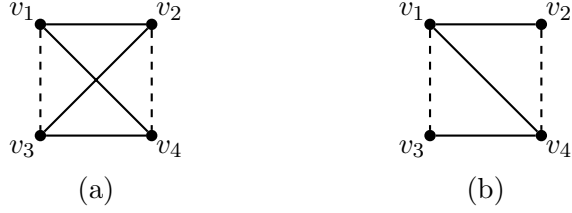
Note first that if $n<2d$ then there is a triangle on the vertices $u,v_1,v_2$ by Lemma~\ref{small:structure}. So we may use a single \dsm\ to perform a flip of either type, as shown at the top of Figure~\ref{fig:flipswitch1}.

For the remainder of the proof, suppose that $n\in \{2d, 2d+1\}$.
Let $S=\{v_i: i\in[4]\}$ be the vertices of the flip, and $N_i=\Nb(v_i)\setminus S$ ($i\in[4]$) be the neighbours of $v_i$ outside $S$. Note that either $|N_i|= d-1\geq 2$ or $|N_i|= d-2\geq 1$ ($i\in[4]$), since $d\geq 3$.
\begin{enumerate}
\item[(a)] First assume that $|N_i|= d-2$ for all $i\in[4]$.
If $N_1$, $N_2$, $N_3$ are pairwise disjoint then
\[ |N_1|+|N_2|+|N_3|=3(d-2)>|V\sm S|=n-4.\]
But this contradicts the fact that $n < 3d-2$
when $d\geq 4$ and $n\in \{2d,2d+1\}$, or when $d=3$ and $n=2d$. Hence
at least one of $N_1\cap N_2, N_1\cap N_3, N_2\cap N_3$ must be nonempty.

    If $N_1\cap N_2\neq\es$ then we can use a single \dsm\ to perform the flip,
    while if $N_1\cap N_3\neq\es$ then we can use a single \dsp. See Figure~\ref{fig:flipswitch1}.
\begin{figure}[H]
\begin{center}
\begin{tikzpicture}[scale=0.74,inner sep=0pt]
\begin{scope}[shift={(0,0)}]
\node (u) [b,label= above left:$u\,$] at (1,2.75) {};
\node (v1) [b,label=above left:$v_1$] at (0,2) {};
\node (v2) [b,label=above right:$v_2$] at (2,2) {} ;
\node (v3) [b,label=below left:$v_3$] at (0,0)  {};
\node (v4) [b,label=below right:$v_4$] at (2,0)  {};
\draw [thick,-] (v2)--(v1) -- (v4) (v2) -- (v3)--(v4) (v1) -- (u) -- (v2) ;
\draw [thick,dashed] (v1) -- (v3) (v2) -- (v4)  ;
\node at (4.5,1.25) {\Dm} ; 
\draw  (3.75,0.75)edge[line width=1.2pt, ->](5.25,0.75);
\end{scope}
\begin{scope}[shift={(7,0)}]
\node (u) [b,label=above left:$u\,$] at (1,2.75) {};
\node (v1) [b,label=above left:$v_1$] at (0,2) {};
\node (v2) [b,label=above right:$v_2$] at (2,2) {} ;
\node (v3) [b,label=below left:$v_3$] at (0,0)  {};
\node (v4) [b,label=below right:$v_4$] at (2,0)  {};
\draw [thick,-] (v1) -- (v3)--(v2) (v4)--(v1)--(u)--(v2)--(v4);
\draw [thick,dashed] (v1) -- (v2) (v3) -- (v4)  ;
\end{scope}
\begin{scope}[shift={(0,-4)}]
\node (u) [b,label=left:$u\,$] at (-0.75,1) {};
\node (v1) [b,label=above left:$v_1$] at (0,2) {};
\node (v2) [b,label=above right:$v_2$] at (2,2) {} ;
\node (v3) [b,label=below left:$v_3$] at (0,0)  {};
\node (v4) [b,label=below right:$v_4$] at (2,0)  {};
\draw [thick,-] (v2)--(v1) -- (v4) (v2) -- (v3) (v1) -- (u) -- (v3)--(v4);
\draw [thick,dashed] (v1) -- (v3) (v2) -- (v4)  ;
\node at (4.5,1.25) {\Dp} ; 
\draw  (3.75,0.75)edge[line width=1.2pt, ->](5.25,0.75);
\end{scope}
\begin{scope}[shift={(7.25,-4)}]
\node (u) [b,label=left:$u\,$] at (-0.75,1) {};
\node (v1) [b,label=above left:$v_1$] at (0,2) {};
\node (v2) [b,label=above right:$v_2$] at (2,2) {} ;
\node (v3) [b,label=below left:$v_3$] at (0,0)  {};
\node (v4) [b,label=below right:$v_4$] at (2,0)  {};
\draw [thick,-] (v1)--(v4) (v3)--(v2) (v1) -- (v3) (v2) -- (v4) (v1) -- (u) -- (v3);
\draw [thick,dashed] (v1)--(v2) (v4)--(v3);
\end{scope}
\end{tikzpicture}
\end{center}\vspace{1ex}
\caption{: Two flips as \ds es}\label{fig:flipswitch1}
\end{figure}
If $N_2\cap N_3\neq\es$ then we must use a \dsm\ followed by a \dsp, to ensure the correct edges are flipped. See Figure~\ref{fig:flipswitch2}.
	    (The edges in each \ds\ can be deduced
	    by the non-edges shown in the figure, and the \ds\ type.)
\begin{figure}[H]
\begin{center}
\begin{tikzpicture}[scale=0.74,inner sep=0pt]
\node (u) [b,label=above left:${}_{}u\,$] at (1,2.75) {};
\node (v1) [b,label=below left:$v_1$] at (0,0) {};
\node (v2) [b,label=above left:$v_2$] at (0,2) {} ;
\node (v3) [b,label=above right:$v_3$] at (2,2)  {};
\node (v4) [b,label=below right:$v_4$] at (2,0)  {};
\draw [thick,-] (v1) -- (v2) (v3) -- (v4) (v2) -- (u) -- (v3) (v1)-- (v4) (v2)--(v3);
\draw [thick,dashed] (v1) -- (v3) (v2) -- (v4) ;
\node at (4.5,1.25) {\Dm} ; 
\draw  (3.75,0.75)edge[line width=1.2pt, ->](5.25,0.75);
\begin{scope}[shift={(7,0)}]
\node (u) [b,label=above left:${}_{}u\,$] at (1,2.75) {};
\node (v1) [b,label=below left:$v_1$] at (0,0) {};
\node (v2) [b,label=above left:$v_2$] at (0,2) {} ;
\node (v3) [b,label=above right:$v_3$] at (2,2)  {};
\node (v4) [b,label=below right:$v_4$] at (2,0)  {};
\draw [thick,-] (v1) -- (v3) (v2) -- (v4) (v2) -- (u) -- (v3) (v3)-- (v4) (v2)--(v1);
\draw [thick,dashed] (v2) -- (v3) (v1) -- (v4) ;
\node at (4.5,1.25) {\Dp} ; 
\draw  (3.75,0.75)edge[line width=1.2pt, ->](5.25,0.75);
\end{scope}
\begin{scope}[shift={(14,0)}]
\node (u) [b,label=above left:${}_{}u\,$] at (1,2.75) {};
\node (v1) [b,label=below left:$v_1$] at (0,0) {};
\node (v2) [b,label=above left:$v_2$] at (0,2) {} ;
\node (v3) [b,label=above right:$v_3$] at (2,2)  {};
\node (v4) [b,label=below right:$v_4$] at (2,0)  {};
\draw [thick,-] (v1) -- (v4)--(v2) -- (v3)--(v1) (v2) -- (u) -- (v3);
\draw [thick,dashed] (v2) -- (v1) (v3) -- (v4) ;
\end{scope}
\end{tikzpicture}
\end{center}\vspace{1ex}
\caption{: A flip as two \ds es}\label{fig:flipswitch2}
\end{figure}
\item[(b)] Now assume that $|N_1|= |N_4|= d-2$ and $|N_2|= |N_3|= d-1$.
If $N_1\cap N_2\neq\es$ then we can perform a \dsm, while if $N_1\cap N_3\neq\es$ then we can perform a \dsp, as for (a) above.  The situation when $N_2\cap N_4\neq \es$
is symmetric with $N_1\cap N_3\neq \es$, so we may assume that $N_1$ is disjoint from
$N_2\cup N_3$, and $N_4$ is disjoint from $N_2$.  It follows that
\[ n\geq |N_1|+|N_2|+|S|=(d-2)+(d-1)+4=2d+1,\]
with equality only if $N_1=N_4$ and $N_2=N_3$.

Thus we may assume that $n=2d+1$ and write $V$ as the disjoint union
$V = N_1\cup N_2\cup S$, with $N_1=N_4$ and  $N_2=N_3$.
Each vertex in $N_1$ is adjacent to both $v_1$ and $v_4$, and hence has $d-2$ other neighbours. Similarly, each vertex in $N_2$ is adjacent to $v_2$ and $v_3$, and has $d-2$ other neighbours. Since $V=N_1\cup N_2\cup S$, all these edges lie in $G[N_1\cup N_2]$.
A total of $(d-1)(d-2)$ of these edges are incident on $N_2$, but only $(d-2)^2$
are incident on $N_1$. Therefore $(d-1)(d-2)-(d-2)^2=d-2\geq2$ edges must lie in $G[N_2]$. Thus $G$ contains the configuration shown in Figure~\ref{fig:flipswitch3} as a subgraph, and we show how to perform the required flip using three \ds es.
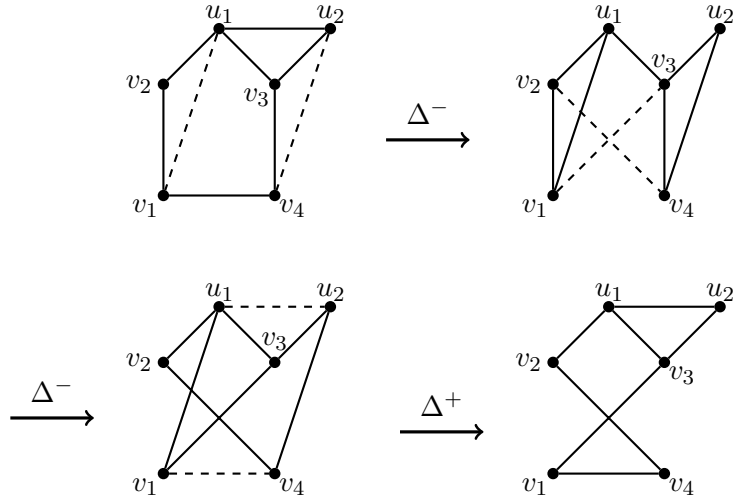
\begin{figure}[H]
\begin{center}
\begin{tikzpicture}[scale=0.74,inner sep=0pt]
\begin{scope}[shift={(0,0)}]
\node (u1) [b,label=above:${}_{}u_1$] at (1,3) {};
\node (u2) [b,label=above:${}_{}u_2$] at (3,3) {};
\node (v1) [b,label=below left:$v_1$] at (0,0) {};
\node (v2) [b,label=left:$v_2\,$] at (0,2) {} ;
\node (v3) [b,label=below left:$v_3$] at (2,2)  {};
\node (v4) [b,label=below right:$v_4$] at (2,0)  {};
\draw [thick,-] (v2)--(v1)--(v4) -- (v3) (v2) -- (u1) -- (v3) (u2)--(v3) (u1)--(u2);
\draw [thick,dashed] (u1)--(v1) (u2)--(v4) ;
\node at (4.75,1.5) {\Dm} ; 
\draw  (4.00,1.0)edge[line width=1.2pt, ->](5.5,1.0);
\end{scope}
\begin{scope}[shift={(7,0)}]
\node (u1) [b,label=above:${}_{}u_1$] at (1,3) {};
\node (u2) [b,label=above:${}_{}u_2$] at (3,3) {};
\node (v1) [b,label=below left:$v_1$] at (0,0) {};
\node (v2) [b,label=left:$v_2\,$] at (0,2) {} ;
\node (v3) [b,label=above:$\strut v_3$] at (2,2)  {};
\node (v4) [b,label=below right:$v_4$] at (2,0)  {};
\draw [thick,-] (v2)--(v1) (v4) -- (v3) (v2) -- (u1) -- (v3) (u2)--(v3) (u1)--(v1) (u2)--(v4);
\draw [thick,dashed] (v1)--(v3) (v2)--(v4) ;
\end{scope}
\begin{scope}[shift={(0,-5)}]
\node at (-2,1.5) {\Dm} ; 
\draw  (-2.75,1.0)edge[line width=1.2pt, ->](-1.25,1.0);
\node (u1) [b,label=above:${}_{}u_1$] at (1,3) {};
\node (u2) [b,label=above:${}_{}u_2$] at (3,3) {};
\node (v1) [b,label=below left:$v_1$] at (0,0) {};
\node (v2) [b,label=left:$v_2\,$] at (0,2) {} ;
\node (v3) [b,label=above:$\strut v_3$] at (2,2)  {};
\node (v4) [b,label=below right:$v_4$] at (2,0)  {};
\draw [thick,-] (v1)--(v3) (v2)--(v4) (v2) -- (u1) -- (v3) (u2)--(v3) (u1)--(v1) (u2)--(v4);
\draw [thick,dashed] (u1)--(u2) (v1)--(v4);
\end{scope}
\begin{scope}[shift={(7,-5)}]
\node at (-2,1.25) {\Dp} ; 
\draw  (-2.75,0.75)edge[line width=1.2pt, ->](-1.25,0.75);
\node (u1) [b,label=above:${}_{}u_1$] at (1,3) {};
\node (u2) [b,label=above:${}_{}u_2$] at (3,3) {};
\node (v1) [b,label=below left:$v_1$] at (0,0) {};
\node (v2) [b,label=left:$v_2\,$] at (0,2) {} ;
\node (v3) [b,label=below right:$v_3$] at (2,2)  {};
\node (v4) [b,label=below right:$v_4$] at (2,0)  {};
\draw [thick,-] (v1)--(v3) (v2)--(v4) (v2) -- (u1) -- (v3) (u2)--(v3) (u1)--(u2) (v1)--(v4) ;
\draw [thick,dashed];
\end{scope}
\end{tikzpicture}
\end{center}\vspace{1ex}
\caption{: A flip as three \ds es when $n=2d+1$}\label{fig:flipswitch3}
\end{figure}
\end{enumerate}
Here $u_1,u_2$ are any distinct elements of $N_2$ such that $u_1u_2\in E$.
Since $N_2=N_3$, both $u_1$ and $u_2$ are also neighbours of $v_3$.
Furthermore,
$u_1v_1$, $u_2v_4$ are both non-edges since $N_1\cap N_3 = N_4\cap N_3=\es$.
\end{proof}

\section{Creating a clique component containing a given vertex}\label{sec:split}

In this section we will prove the following.

\begin{theorem}\label{thm:split}
Suppose that $d \ge 3$ and $n \geq 2(d+1)$.  Given any $G\in \Gnd$ and any vertex
$v$ of $G$, let $S = \Nb_G[v]$ be the closed neighbourhood of $v$ in $G$.
Then there is a sequence of \ds es which ends in a graph $G'$  which has a
clique component on the vertex set $S$.
\end{theorem}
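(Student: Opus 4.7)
The plan is to use \dsp es centred at $v$ to fill in the subgraph $G_1 = G[V_1]$ until it becomes a clique $K_d$. Once $G_1 \cong K_d$, each $u \in V_1$ has used its $d{-}1$ non-$v$ neighbour slots inside $V_1$, so no edges leave $V_1$ for $V_2$ and $S = \Nb[v]$ becomes a $K_{d+1}$ component of the graph, as required. Recall that a \dsp\ at $v$ on the 4-path $y\,x\,v\,w\,z$ inserts $xw$ (creating the triangle $vxw$) while deleting $xy, wz$ and inserting $yz$; the two new edges lie inside $V_1$ and $V_2$ respectively. Hence each such switch strictly decreases the monovariant
\[
\phi(G) \;=\; \binom{d}{2} - |E_1|,
\]
the number of non-edges inside $V_1$, and the theorem reduces to showing that $\phi$ can always be decreased while $\phi > 0$.

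To prove this, pick any non-edge $xw$ of $G_1$. Since $xw \notin E$, $x$ has at most $d{-}2$ neighbours in $V_1$, hence $\od(x) \ge 1$; likewise $\od(w) \ge 1$. A \dsp\ at $v$ inserting $xw$ is available iff there are $y \in \Out(x), z \in \Out(w)$ with $y \ne z$ and $yz \notin E$. If such an \emph{admissible} pair exists for some non-edge of $G_1$, perform the switch. Otherwise every non-edge is \emph{blocked}: for every valid $y, z$ either $y = z$ or $yz \in E$. Writing $A = \Out(x) \cap \Out(w)$, $B = \Out(x) \sm A$ and $C = \Out(w) \sm A$, blockedness forces $A$ to be a clique of $G_2$ and every pair in $A \times (B \cup C)$ and $B \times C$ to be an edge of $G_2$. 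The tightest subcase is $A = \Out(x) = \Out(w) = \{y\}$, where $x$ and $w$ are each adjacent to every other vertex of $V_1$ and share the unique external neighbour $y$.

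To unblock, we apply one or two preparatory \ds es. Generically, a blocking edge $yz$ of $G_2$ can be removed by a \dsm\ at $y$ or $z$ using a triangle in its neighbourhood; the hypotheses $d \ge 3$ and $n \ge 2(d+1)$ give $|V \sm \Nb[v]| \ge d+1$, leaving enough room in $V_2 \cup V_3$ to supply the required auxiliary vertex. In the extreme subcase $\Out(x) = \Out(w) = \{y\}$, we use $\id(y) \ge 2$ together with the vertices outside $\Nb[v]$ to execute a \ds\ at some $u \in V_1 \sm \{x, w\}$ or at $y$ itself that reroutes exactly one of $xy, wy$ onto a fresh target outside $\Nb[v]$; this destroys the symmetry $\Out(x) = \Out(w)$ and makes a subsequent \dsp\ at $v$ admissible. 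In each case $\phi$ does not increase during the preparation and decreases by one at the final \dsp\ at $v$.

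The main obstacle is the detailed case analysis for the blocked configurations: the rigid structure of $A, B, C$ in $V_2$ leaves only narrow avenues for preparation, and one must verify each time that a valid preparatory \ds\ exists, does not delete an already-present $V_1$-edge, and does not produce a newly blocked configuration from which no further progress is possible. Once this key lemma is proved, iterating it at most $\binom{d}{2}$ times drives $\phi$ to zero, so that $G_1 \cong K_d$ and $\Nb[v]$ becomes the desired $K_{d+1}$ clique component.
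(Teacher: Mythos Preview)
Your strategy is the paper's strategy: use \ds es at $v$ to drive $\phi(G)=\binom{d}{2}-|E_1|$ to zero. But the proposal has a genuine gap and leaves the hard part undone.

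The gap is your use of $n\ge 2(d+1)$ to conclude that ``$|V\sm\Nb[v]|\ge d+1$, leaving enough room in $V_2\cup V_3$''. The $d+1$ vertices outside $\Nb[v]$ need not lie in $V_2\cup V_3$; they need not lie in the component $C$ of $v$ at all. Worse, your own preparatory \ds es can disconnect $C$, so that midway through the process $C$ becomes a fragment with $d+1<|C|<2(d+1)$. In that regime there may be no room for the auxiliary vertex you want, and indeed $V_3$ can be empty while $G_2$ is complete. The paper confronts this explicitly (Lemma~\ref{lem:biggerC}): whenever $C$ is a fragment, it uses a \dsm\ to splice in another component, enlarging $C$ to at least $2d+3$ and guaranteeing a non-edge in $G_2$, all without touching $E_1$. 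Your outline has no analogue of this step.

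The second issue is that you state, rather than prove, the ``key lemma''. The sentence ``generically, a blocking edge $yz$ of $G_2$ can be removed by a \dsm\ at $y$ or $z$ using a triangle in its neighbourhood'' is not an argument: one must exhibit the triangle and the target non-edges, and check that the switch neither deletes an edge of $E_1$ nor destroys the setup. The paper's organisation here is instructive. Rather than fixing a non-edge $xw$ of $G_1$ and trying to unblock it, the paper starts from a non-edge $ab$ of $G_2$ and manufactures a suitable pair $x,y\in V_1$ beneath it (Lemmas~\ref{lem:P1}--\ref{lem:P3}); when $G_2$ has no non-edge it runs a separate subroutine (Lemmas~\ref{L1}--\ref{L3}) that, using only information about $V_2,V_3,V_4$, produces one. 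Your ``extreme subcase'' $\Out(x)=\Out(w)=\{y\}$ is exactly the kind of configuration these lemmas are designed to dismantle, and the verification that a valid \ds\ exists in every subcase is the substance of the proof, not a detail to be deferred.
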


Note that $\Nb_{G'}[v] = \Nb_G[v]$, that is, the closed neighbourhood of $v$
is preserved by this process. This property will be used in Section~\ref{sec:sorting}.

\subsection{Proof strategy}\label{subsec:strategy}
We will consider $G$ as being layered from $v$, as defined in Section~\ref{sec:defs} above.
Let $C$ be the component of $G$ which contains $v$.
We prove that, provided $|C| \ge 2(d+1)$, there is a sequence of \ds es such that $V_1$ remains unchanged, but $|E_1|$ increases monotonically.
We repeat the following steps to add edges to $E_1$ until $G[V_0\cup V_1]\cong \Kd$.

\begin{enumerate}
\item \label{0} If, before any step below, the component $C$ containing $v$ is a fragment
(that is, if $d+1 < |C| < 2(d+1)$), use a \dsm\ to increase the size of $C$ to at least $2(d+1)$ without removing any edge in $E_1$. This can be done in such a way that $G_2$ now contains at least one non-edge, as we will prove
in Lemma~\ref{lem:biggerC}.

\item \label{1a}
While there is a vertex $u\in V_1$ which is not adjacent to any vertex in $V_1$
(that is, with $\dg(u)=0$)
make a \dsp\ to introduce an edge incident with $u$ in $G_1$.
That this is always possible will be proved in Lemma~\ref{lem:P0}.

After repeating this as many times as necessary, every vertex in $V_1$ will have an incident edge in $G_1$. Thus, every vertex $u\in V_1$ with a neighbour in $V_2$ will have $1 \le \dg(u) \le d-2$.

\item \label{1}
If $V_2=\emptyset$, stop and return the current graph as $G'$.
Otherwise,
while there is a non-edge $ab$ in $G_2$, insert edges into $E_1$ as follows:
\begin{enumerate}
\item Suppose that there is a unique $x \in V_1$ such that $a,b\in V_2$ are in $\Out(x)$ only. Thus $\id(a)=\id(b)=1$.
 Use  Lemma~\ref{lem:P1} to  make a \dsp\ which replaces edge $xb$ with $yb$ for some $y \in V_1$, $y \ne x$, thus giving a pair $x,y$ below non-edge $ab$.

\item Suppose that some pair $x,y$ below $ab$ is a non-edge of $G_1$. Use a \dsp\ at $v$ to switch $xa,yb$ to $xy,ab$; thus increasing the number of edges in $G_1$, as in Lemma~\ref{lem:P2}.

\item Now suppose that every pair $x, y$  below $ab$ is an edge $xy$ of $G_1$.
Choose one such pair and use the \ds\ at $v$ of Lemma~\ref{lem:P3} to make $xy$
a non-edge.  Then use a \ds\ at $v$ to switch $xa,yb$ to $xy,ab$.
\end{enumerate}

\item \label{2}
If $\ell=|V_2| \ge d+1$ then there are necessarily non-edges in $G_2$.
If  $\ell=d$ and $|C|  = 2(d+1)$,  then $V_3=\{u\}$, for some $u$, and $V_2=\Nb(u)$.
Again there are necessarily non-edges in $G_2$.
In either case go back to Step~\ref{1} above.

\item
If we reach here then $\ell \le d$ and $V_2$ is a complete graph $K_\ell$. If $V_3=\es$ then $|C| =1+d+\ell \le 2d+1$, a contradiction. Thus $V_3\ne \es$. The case $|C|=2(d+1)$ was covered in Step~\ref{2}, so we assume that $|C| > 2(d+1)$. Carry out the steps
in Lemmas~\ref{L1}--\ref{L3} to insert a non-edge into $G_2$, and go  to Step~\ref{1} above.
\end{enumerate}

\subsection{Increasing the size of $C$}\label{subsec:increase}

\begin{lemma}\label{lem:biggerC}
Suppose that $d\geq3$ and $n > 2(d+1)$.
If vertex $v$ is in a fragment $C$ then there is a \dsm\ to increase the size of
$C$ to at least $2d+3$ without changing the edges of $G_1$. After this switch, $V_2$ will contain a non-edge.
\end{lemma}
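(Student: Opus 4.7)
The plan is to exhibit a single $\Delta^-$-switch whose triangle $\{u_0,x,w\}$ and non-triangle edge $yz$ lie in different components of $G$, so that applying the switch fuses those two components. Since $n>2(d{+}1)>|C|$, there is another component $C'$ of $G$, and because $C'$ is $d$-regular we have $|C'|\geq d{+}1$; the merge will produce a new component containing $v$ of size at least $|C|+|C'|\geq (d{+}2)+(d{+}1)=2d{+}3$, as required.

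For the switch to preserve $V_1=\Nb(v)$ and the edge set of $G_1$, I need $v\notin\{x,w,y,z\}$ and $\{x,w\}\not\subseteq V_1$; the constraints on $yz$ are automatic, since $y,z$ lie in a component other than $C$, and the added edges $xy,wz$ cross components and hence are non-edges. The main case is that $C$ contains a triangle with at least one vertex in $V_2$. By Lemma~\ref{small:structure}(i), when $|C|<2d$ every edge of $C$ lies in a triangle; and since each $V_2$-vertex has a neighbour in $V_1$ (as $C$ has diameter $2$ by Lemma~\ref{small:diameter} and $|V_2|\leq d$), some $V_1$-$V_2$ edge $u_0x$ extends to a triangle $\{u_0,x,w\}$ with $w\in V_1\cup V_2$. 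Choosing $u_0\in V_1$ as the apex, the deleted edge $xw$ satisfies $\{x,w\}\not\subseteq V_1$ because $x\in V_2$. For $|C|\in\{2d,2d{+}1\}$ the same construction works unless $C$ is triangle-free; by Lemma~\ref{small:structure}(ii),(iii) the only triangle-free fragments are $K_{d,d}$ and $T_{d,d,1}$, and in those exceptional cases I swap the roles of $C$ and $C'$, picking a triangle in $C'$ (or in any other component containing one) and choosing $yz$ to be a $V_1$-$V_2$ edge of $C$, which exists because $V_2\neq\emptyset$ in any fragment.

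To guarantee a non-edge in the new $V_2$, I aim for a triangle in which $x,w\in V_2$ (a triangle with apex $u_0\in V_1$ and both other vertices in $V_2$). Then the edges $u_0x$ and $u_0w$ persist, so both $x$ and $w$ remain at distance $2$ from $v$ after the switch, and the just-deleted edge $xw$ is a non-edge lying entirely inside the new $V_2$. The main obstacle will be the detailed case analysis when $|C|\in\{2d,2d{+}1\}$: when $C$ is triangle-free and the switch must use a triangle from $C'$ together with an edge $yz$ from $C$, one has to track carefully the layers of the $C$-endpoints of $yz$ after the switch, and select $yz$ (for instance a $V_1$-$V_2$ edge whose $V_2$-endpoint retains another $V_1$-neighbour) so that a non-edge in the new $V_2$ is still guaranteed.
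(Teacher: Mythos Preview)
Your argument has a real gap in the ``swap the roles of $C$ and $C'$'' step. You assume that when $C$ is triangle-free you can find a triangle in some other component, but nothing forces this: take $n=4d$ and let $G$ be the disjoint union of two copies of $K_{d,d}$. Then $d\geq 3$, $n>2(d{+}1)$, the component $C$ containing $v$ is a fragment isomorphic to $K_{d,d}$, and $G$ is bipartite---so $G$ contains no triangle whatsoever and there is no $\Delta^-$-switch available anywhere. A single $\Delta^-$-switch cannot accomplish the merge in this instance, so your plan as stated fails.

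The paper resolves this by allowing a preliminary $\Delta$-switch before the merging $\Delta^-$-switch (so up to two switches in total; the wording ``a \dsm'' in the lemma statement is slightly loose, as the proof itself notes ``these (at most two) \ds es''). Concretely, it picks an edge $ab$ with $a\in V_2$, $b\in V_1$ and invokes Lemma~\ref{small:triangles}: after at most one $\Delta$-switch that does not touch the edges at $b$, there is a triangle through $a$. Since $V_3=\varnothing$, that triangle has the form $\{x,a,z\}$ with either $x\in V_2$ or $x,z\in V_1$; in both cases the $\Delta^-$-switch deleting $xa$ together with any edge $x'y'$ of $C'$ merges the components, keeps $a\in V_2$, and leaves a specific non-edge in the new $V_2$ (namely $xa$ in the first case, $ax'$ in the second). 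There is also a smaller gap in your treatment of $|C|\in\{2d,2d{+}1\}$ with $C$ not triangle-free: Lemma~\ref{small:structure}(ii),(iii) only promises that a chosen edge lies in a triangle \emph{or} an induced $C_4$, so ``the same construction works'' is unjustified---a triangle in $C$ need not meet $V_2$, and a triangle entirely inside $\{v\}\cup V_1$ is unusable under your constraints. The paper's appeal to Lemma~\ref{small:triangles} is exactly what handles all of these cases uniformly.
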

\begin{proof}
As $n \ge 2(d+1)$ the graph $G$ has another component $C'$ which must contain at
least $d+1$ vertices, with $v\notin C'$.

Let $ab$ be an edge with $a\in V_2$, $b\in V_1$. Then, using Lemma~\ref{small:triangles},
after at most one \ds\ there is a triangle incident with $a$.

Since $V_3=\es$, by Lemma~\ref{small:diameter}, there is a triangle on the vertices
$x,a,z$ with an edge $xa$ in $G_2$, or with edges $xa$, $za$ such that $x,z\in V_1$.
Choose any edge $x'y'\in G[C']$, and perform the \dsm\ on the triangle $x,a,z$ and edge $x'y'$ which removes the edges $xa$ and $x'y'$ and inserts the edges $xx',ay'$. If previously $xa\in G_2$, then after the \ds, $a$ remains in $G_2$ and $xa$ is now a non-edge in $G_2$. If $x,z\in V_1$ then we still have $z\in V_1$ with $az$ an edge, so $a$ remains in $V_2$. Also $x'\in V_2$, since it is now adjacent to $x\in V_1$. So $ax'$ becomes a non-edge in $G_2$. Note that these (at most two) \ds es do not change $G_1$.

After this, we will have $C\gets C\cup C'$, so $|C|\gets |C|+|C'| > 2d+2$.
\end{proof}
This procedure does not increase $|E_1|$ but, as we show next, the non-edge in $G_2$ allows us to increase $|E_1|$ with at most two further \ds es.
Thus the process outlined in Section~\ref{subsec:strategy} must terminate in a finite
number of steps. 

\subsection{$G_2$ has a non-edge}\label{sec:non-edge}
Recall that a \ds\ at $v$,
as in Figure~\ref{fig:fig1},
can be regarded as a switch in $G\setminus \{v\}$.  We take this view below.

\begin{lemma}\label{lem:P0}
Suppose that $d\geq 3$.
Let $C$ be the component of $G$ which contains $v$, and suppose that
$|C|\geq 2(d+1)$. If $u \in V_1$ has $\dg(u)=0$ then we can use at most two
\ds es to
insert an edge in $V_1$ at $u$, without altering any other edge in $E_1$.
\end{lemma}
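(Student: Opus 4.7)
The plan is to add the required edge by a \dsp\ at $v$ that creates the triangle $\{v,u,w\}$ for some $w\in V_1\setminus\{u\}$, preceded if necessary by a single preparatory \ds\ that leaves $E_1$ untouched. Since $\dg(u)=0$, the vertex $u$ has all $d-1\geq 2$ of its non-$v$ neighbours in $V_2$; set $Y:=\Out(u)$. Fix any $w\in V_1\setminus\{u\}$ (available as $|V_1|=d\geq 3$); then $uw\notin E$. For any $y\in Y$ and any $z\in\Out(w)\setminus\{y\}$ with $yz\notin E$, the 4-edge path $yuvwz$ admits a \dsp\ at $v$ that deletes $uy,wz$ and inserts $uw,yz$. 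Since $u,w\in V_1$ while $y,z\in V_2$, the only edge among the four with both endpoints in $V_1$ is the new $uw$, so this switch changes $E_1$ exactly by the intended addition. The task therefore reduces to locating such a triple $(y,w,z)$, possibly after one preparatory switch.

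First I would attempt this direct \dsp. It is available iff for some $y\in Y$ and some $w\in V_1\setminus\{u\}$ we have $\Out(w)\not\subseteq\Nb[y]\cap V_2$. Since $|\Out(w)|=\od(w)$ and $|\Nb[y]\cap V_2|=1+\dg(y)$, the direct switch succeeds whenever $\od(w)>1+\dg(y)$ for some compatible pair $(y,w)$; by varying $w$ and $y$ this covers most configurations and disposes of them with a single \ds.

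The main obstacle is the rigid remaining case: $\Out(w)\subseteq\Nb[y]\cap V_2$ for every $y\in Y$ and every $w\in V_1\setminus\{u\}$. Here I would perform a preparatory \ds\ at some vertex $v'\neq v$ to create a non-edge $yz$ with $y\in Y$ and $z\in\Out(w)$, without altering any edge of $E_1$; afterwards the direct \dsp\ above becomes applicable. Concretely, the rigid inclusion forces many triangles on edges $yz$ inside $V_2$, so one can choose such a triangle $\{v',y,z\}$ (with $v'$ a common neighbour of $y$ and $z$---for instance $v'=w$ when $y\in\Out(w)$, or else $v'\in V_2\cup V_3$) together with a disjoint edge $ab\in E$ satisfying $\{a,b\}\not\subseteq V_1$ and $ya,zb\notin E$, and apply the corresponding \dsm\ destroying $yz$; ensuring at least one of $a,b$ lies outside $V_1$ keeps every edge of $E_1$ intact. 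The hypothesis $|C|\geq 2(d+1)$ supplies enough vertices in $V_2\cup V_3\cup\cdots$ to source such an edge $ab$ and the required non-edges. Once the preparation is complete, the direct \dsp\ at $v$ adds $uw$ to $E_1$, completing the construction in at most two triangle switches.
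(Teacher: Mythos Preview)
Your overall plan---perform the \dsp\ at $v$ on a path $yuvwz$, preceded if necessary by a single preparatory switch---matches the paper's strategy, and your identification of the ``rigid case'' as the obstruction to the direct switch is correct. However, your treatment of that case has a genuine gap: the preparatory \dsm\ you propose requires a triangle on the edge $yz$, and such a triangle need not exist.

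Here is an explicit instance with $d=3$ and $|C|=8=2(d+1)$. Take $V_1=\{u,w_1,w_2\}$ with $w_1w_2\in E$, set $Y=\Out(u)=\{y_1,y_2\}$, and let $B=\{b_1,b_2\}$ with $b_1\in\Out(w_1)$, $b_2\in\Out(w_2)$. Make each $y_i$ adjacent to both $b_j$'s, and nothing else. This graph is $3$-regular, lies in the rigid case (every $\Out(w_i)\subseteq\Nb[y_j]$), and yet no edge $y_ib_j$ lies in a triangle: $\Nb(y_i)\cap\Nb(b_j)=\{u,b_1,b_2\}\cap\{y_1,y_2,w_j\}=\emptyset$. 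So your \dsm\ is unavailable. Your assertion that ``the rigid inclusion forces many triangles on edges $yz$ inside $V_2$'' is therefore false in general; indeed in the rigid case with $h>0$ one can show $|B|=d-1$ and $\Nb(y)=\{u\}\cup B$ for every $y\in Y$, so $G[Y]$ is edgeless and no $yz$ with $z\in B$ has a common neighbour at all.

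The paper handles exactly this configuration differently: its preparatory move is a \dsp\ at $u$ (not a \dsm), on the path $zxux'z'$ with $x,x'\in Y$ and $z,z'\in B$, which \emph{creates} rather than \emph{requires} a triangle and simultaneously produces the needed non-edge $xz$. In the example above this is the \dsp\ on $b_1y_1uy_2b_2$, after which the direct \dsp\ at $v$ on $y_1uvw_1b_1$ inserts $uw_1$ into $E_1$. To repair your argument you would need to replace the \dsm\ preparation by such a \dsp, and verify the requisite non-edges $xx'$ and $zz'$.
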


\begin{proof}
As $\dg(u)=0$, we have $\od(u)=d-1$. Thus $\ell \ge d-1$.
Let $B=V_2\sm\Out(u)$ and let $h=\ell-(d-1)$ be the number of vertices in $B$.

Suppose first that $h>0$ (that is, $\ell\geq d$). There are 3 subcases.

(i) Some vertex $x \in \Out(u)$ has $\id(x) \ge 2$, so $x \in \Out(w)$, $w \ne u$. There are at most $d-2$ edges incident with $x$ in $G_2$, so there is a non-edge $xz$ in $G_2$. Let $y \in \In(z)$. If $y=u$ then switch $uz,wx$ to $uw,xz$, else switch $ux,yz$ to $uy,xz$.

(ii) Some vertex $x \in B$ has $\id(x) \ge 2$. As $|\Out(u)|=d-1$ there is a non-edge $xz$, $z \in \Out(u)$. Since $x \in B$ there is some $w\neq u$ with  $x \in \Out(w)$. Switch $uz,wx$ to $uw,zx$.

(iii) All vertices $x$ in $V_2$ have $\id(x)=1$. If there is a non-edge from
$\Out(u)$ to $B$, say $x\in \Out(u)$ and $z\in B$, then let $w\in V_1$ such that
$z\in \Out(w)$.  By assumption, $w\neq u$ and we can switch $ux$, $wz$ to $uw$, $xz$.
Now suppose that there is no non-edge from $\Out(u)$ to $B$.
Recall that $|\Out(u)|=d-1\geq 2$.
Each vertex in $V_1 \setminus \{u\}$ has at least one neighbour in $V_2$
(even if $G[V_1 \setminus \{u\}]$ is complete), and all of these neighbours
are distinct by assumption.  Hence $|B|\geq d-1\geq 2$.  
Choose $x,x' \in \Out(u)$ and $z,z'\in B$, and
let $w,w'\in V_1$ be such that $z \in \Out(w)$ and $z' \in \Out(w')$. By assumption,
$w\neq w'$ so we can use the \dsp\ $zxux'z'$ to remove $xz$, then switch $ux,wz$
to $uw$, $xz$.

Thus we can assume that $h=0$ and  $\ell=d-1$.
In this case $B=\es$.

If some vertex in $x\in V_2$ has $\id(x)\geq 3$ then $\dg(x)\leq d-3$,
and so $x$ has a non-edge in $G_2$.
Then we can proceed as in (i) above.
Otherwise, the $d-1$ vertices in $V_1 \setminus \{ u\}$ each have at least
one neighbour in $V_2$, and these must be distinct since no $x\in V_2$ has
$\id(x)\geq 3$. Hence
there are exactly $2(d-1)$ edges between $V_1$ and $V_2$
and all vertices of $V_2$ have in-degree 2.
Again, if there is a non-edge $xz\in G_2$ then we can switch $ux$, $wz$
to $uw$, $xz$, where $z\in \Out(w)$ and $w\in V_1\setminus \{u\}$.
Otherwise, $G_2$ is isomorphic to $K_{d-1}$, and any $x\in V_2$ has
$\od(x)=d-2-(d-2)=0$.
Hence $V_3=\es$ and $|C|=1+d+(d-1)=2d$. So $C$ is a fragment,
contradicting $|C|\geq 2(d+1)$. Thus this case cannot occur.
\end{proof}

Let $ab$ be a non-edge of $V_2$ above $x,y \in V_1$. We will show that we can rearrange the edges of $G_1$ as necessary to enable a \ds\ $xavyb$, replacing $xa,yb$ with $xy,ab$, inserting an edge $xy$ into $E_1$. Lemma~\ref{lem:P1} deals with the case where $ab$ lies uniquely within $\Out(u)$ for some $u \in V_1$. Lemmas~\ref{lem:P2} and~\ref{lem:P3} interchange edges and non-edges in $G_1$ if necessary. First, we show that we can assume that every non-adjacent pair $a,b\in V_2$ is above some pair in $V_1$.

\begin{lemma}\label{lem:P1}
Let $d \ge 3$ and $\dg(u)\geq 1$ for all $u\in V_1$.  Let $a,b\in V_2$ be a pair
of distinct non-adjacent vertices such that
$\In(a)=\In(b)=\{x\}$ for some $x\in V_1$.
Then there is a \ds\ at $v$ to move $b$ to $\Out(y)$ for some $y\in V_1$, without altering $E_2$,
so that $a,b$ is above $x,y$.
 \end{lemma}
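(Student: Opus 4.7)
The plan is to perform a single \dsp\ at $v$ which deletes edges $xb$ and $wy'$ and inserts non-edges $xw$ and $y'b$, for a suitably chosen pair $w,y'\in V_1$. Since $x,w\in V_1=\Nb(v)$, the new edge $xw$ completes the triangle on $v,x,w$, so this is a genuine \dsp\ at $v$. Its effect is to move $b$ from $\Out(x)$ to $\Out(y')$ while leaving $a$'s adjacencies and $E_2$ untouched.

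To find $w$ and $y'$, I first observe that since $a$ and $b$ are distinct elements of $\Out(x)$, we have $\od(x)\ge 2$; combined with $\id(x)=1$ this gives $\dg(x)=d-1-\od(x)\le d-3$. Hence $|V_1\setminus\Nb[x]|\ge d-1-(d-3)=2$, and I can pick any $w\in V_1$ with $w\ne x$ and $xw\notin E$. The hypothesis $\dg(u)\ge 1$ for all $u\in V_1$ then provides a neighbour $y'\in V_1$ of $w$, and since $x\notin\Nb(w)$ we automatically have $y'\ne x$.

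It remains to verify that the \dsp\ is valid and has the claimed effect. The edges $xb$ (by hypothesis) and $wy'$ (by construction) are present, while $xw$ (by choice of $w$) and $y'b$ (since $\In(b)=\{x\}$ and $y'\ne x$) are non-edges. Each of these four pairs has at least one endpoint in $V_1$, so none lies in $E_2$, and $E_2$ is unaffected by the switch. After the switch, $b$ loses its only $V_1$-neighbour $x$ and gains $y'$, so $\In(b)=\{y'\}$ and $b\in\Out(y')$; the vertex $a$ is untouched, so $a\in\Out(x)$ still, giving $a,b$ above $x,y'$ as required. The only subtle point in the argument is the counting bound $|V_1\setminus\Nb[x]|\ge 2$, but this follows immediately from $\od(x)\ge 2$, so I anticipate no serious obstacle.
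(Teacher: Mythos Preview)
Your proof is correct and follows essentially the same route as the paper: choose $w\in V_1$ with $xw\notin E$, take a $G_1$-neighbour $y'$ of $w$ (necessarily $y'\neq x$), and perform the switch $xb,wy'\to xw,by'$, which is a \dsp\ at $v$. The only cosmetic difference is that you use $\od(x)\ge 2$ to get $\dg(x)\le d-3$, whereas the paper uses only $\od(x)\ge 1$ to get $\dg(x)\le d-2$; either bound suffices to produce a non-neighbour $w$ of $x$ in $V_1$.
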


\begin{proof}
In Figure~\ref{fig:figP6n}, $xb$ is an edge and so $\dg(x)\leq d-2$. Hence there is a non-edge $xw$ for some $w \in V_1$. As $\dg(w) \geq 1$ there is some $y \in V_1$ such that $wy$ is an edge. Clearly $y\neq x$. Note that $yb$ is a non-edge because $\id(b)=1$. (Pairs not shown as an edge or non-edge can be either.) Now switch $xb, wy$ to $xw,by$.
\end{proof}
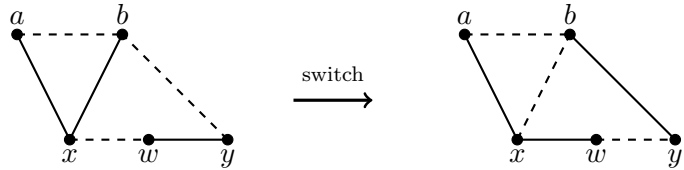
\begin{figure}[H]
\begin{center}
\begin{tikzpicture}[scale=0.7]
\draw[thick, -]  (1,0)--(2,2) (1,0)--(0,2);
\draw[thick, dashed] (0,2)--(2,2);
\draw[thick, -] (2.5,0)--(4,0);
\draw[thick, dashed] (1,0)--(2.5,0) (4,0)--(2,2);
\node [below] at (1,0)  {$x$};
\node [below] at (2.5,0)  {$w$};
\node [below] at (4,0)  {$y$};
\node [above] at (0,2)  {$a$};
\node [above] at (2,2)  {$b$};
\draw [fill] (1,0) circle (0.1);
\draw [fill] (4,0) circle (0.1);
\draw [fill] (2.5,0) circle (0.1);
\draw [fill] (0,2) circle (0.1);
\draw [fill] (2,2) circle (0.1);
\node at (6.0,1.25) {\scriptsize  switch} ; 
\draw  (5.25,0.75)edge[line width=1.2pt, ->](6.75,0.75);
\begin{scope}[shift={(8.5,0)}]
\draw[thick, dashed] (0,2)--(2,2);
\draw[thick, dashed]  (1,0)--(2,2);
\draw[thick, dashed] (2.5,0)--(4,0);
\draw[thick, -] (1,0)--(2.5,0) (4,0)--(2,2) (1,0)--(0,2);
\node [below] at (1,0)  {$x$};
\node [below] at (2.5,0)  {$w$};
\node [below] at (4,0)  {$y$};
\node [above ] at (0,2)  {$a$};
\node [above ] at (2,2)  {$b$};
\draw [fill] (1,0) circle (0.1);
\draw [fill] (4,0) circle (0.1);
\draw [fill] (2.5,0) circle (0.1);
\draw [fill] (0,2) circle (0.1);
\draw [fill] (2,2) circle (0.1);
\end{scope}
\end{tikzpicture}
\end{center}
\caption{: The switch in Lemma~\ref{lem:P1}, which changes $\Nb(b)\cap V_1$}
\label{fig:figP6n}
\end{figure}

\begin{lemma} \label{lem:P2}
Let $ab$ be a non-edge of $G_2$, above a non-edge $xy$ in $G_1$. Then there is a \dsp\ to put $xy\in E_1$ without altering any other edges of $E_1$.
\end{lemma}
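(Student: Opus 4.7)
The plan is to apply a single \dsp\ at $v$ to the 4-edge path $a\,\text{--}\,x\,\text{--}\,v\,\text{--}\,y\,\text{--}\,b$. First I would check this configuration is available. Since $x,y\in V_1=\Nb(v)$, the edges $vx$ and $vy$ are present. Since the non-edge $ab$ lies above $x,y$, by definition $a\in\Out(x)$ and $b\in\Out(y)$, so $xa$ and $yb$ are edges of $G$. The two pairs that must be non-edges in a \dsp\ (in the notation of Figure~\ref{fig:fig1}, the pair $xw$ corresponding to the edge the triangle is formed on, and the pair $yz$ at the far end) are in our setting exactly $xy$ and $ab$, both of which are non-edges by hypothesis.

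I would then note that the five vertices $v,x,y,a,b$ are distinct: $v\notin V_1\cup V_2$ by the layering, $V_1\cap V_2=\es$, $x\ne y$ because $xy$ is a non-edge, and $a\ne b$ because $ab$ is a non-edge. So the \dsp\ on $a\,\text{--}\,x\,\text{--}\,v\,\text{--}\,y\,\text{--}\,b$ is a well-defined triangle switch; it deletes the edges $xa,yb$ and inserts the edges $xy,ab$, creating the triangle on $v,x,y$.

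Finally I would verify the claim that no other edge of $E_1$ is disturbed. Of the four edges modified by the switch, only $xy$ has both endpoints in $V_1$. The deleted edges $xa,yb$ have one endpoint in $V_1$ and the other in $V_2$, so they lie in neither $E_1$ nor $E_2$; the inserted edge $ab$ lies in $E_2$. Hence $E_1$ gains exactly the single edge $xy$, with no deletions from $E_1$, which is what is required.

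There really is no main obstacle here: once Lemma~\ref{lem:P1} has been used to guarantee that every non-edge of $G_2$ can be taken to lie above a pair in $V_1$, this lemma is a one-line application of the definition of a \dsp, and the argument just needs to record that the \dsp\ is legal and that its effect on $E_1$ is exactly the desired one.
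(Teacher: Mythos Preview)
Your proof is correct and takes exactly the same approach as the paper: the paper's entire proof is the single line ``Clearly $axvyb$ is the required \dsp'', and you have simply unpacked the verification that this switch is legal and affects $E_1$ only by inserting $xy$.
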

\begin{proof}
Clearly $axvyb$ is the required \dsp.
\end{proof}
\begin{figure}[H]
\begin{center}
\begin{tikzpicture}[scale=0.7]
\draw[thick, -]  (0,0)--(0,2) (2,0)--(2,2);
\draw[thick, dashed] (0,0)--(2,0) (0,2)--(2,2) ;
\node [b,label=below:$x$] at (0,0)  {};
\node [b,label=below:$y$] at (2,0)  {};
\node [b,label=above:$a$] at (0,2)  {};
\node [b,label=above:$b$] at (2,2)  {};
\node at (5,1.25) {\scriptsize switch} ; 
\draw  (4.25,0.75)edge[line width=1.2pt, ->](5.75,0.75);
\begin{scope}[shift={(8,0)}]
\draw[thick, dashed]  (0,0)--(0,2) (2,0)--(2,2);
\draw[thick, -] (0,0)--(2,0) (0,2)--(2,2) ;
\node [b,label=below:$x$] at (0,0)  {};
\node [b,label=below:$y$] at (2,0)  {};
\node [b,label=above:$a$] at (0,2)  {};
\node [b,label=above:$b$] at (2,2)  {};
\end{scope}
\end{tikzpicture}
\end{center}
\caption{: The switch in Lemma~\ref{lem:P2} which inserts $xy$ into $E_1$. }
\label{fig:figP7n}
\end{figure}
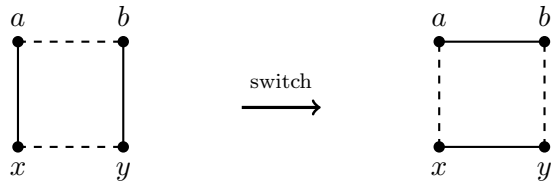

\begin{lemma} \label{lem:P3}
Let $d \ge 3$, and let $xy\in E_1$ be such that $\od(y)\geq1$. Then there is a \ds\ which makes $xy$ a non-edge,
without changing $E_2$ or decreasing $|E_1|$.
 \end{lemma}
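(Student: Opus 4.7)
My plan is to exhibit a single \dsp\ at $v$ whose base 4-edge path is $x - y - v - w - z^*$, where $w$ will be a non-neighbour of $y$ in $V_1$ and $z^*$ will be a neighbour of $w$ that is not a neighbour of $x$. Such a switch deletes the edges $xy$ and $wz^*$, inserts the edges $yw$ and $xz^*$, and creates the triangle on $v,y,w$; in particular $xy$ becomes a non-edge. The existence of $w$ follows from the hypothesis, since $\od(y)\geq 1$ is equivalent to $\dg(y)\leq d-2$, so $|V_1\sm\Nb[y]|\geq d-1-\dg(y)\geq 1$. Any $w$ chosen from this set is automatically distinct from $x$, because $x\in\Nb(y)$.

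The crux is to show $\Nb(w)\sm\Nb[x]\neq\es$, so that a valid $z^*$ exists; this is the main obstacle. I would argue by contradiction: assume $\Nb(w)\seq\Nb[x]$ and split on whether $w\in\Nb(x)$. If $w\in\Nb(x)$, then a cardinality count (using $|\Nb(w)|=d$, $|\Nb[x]|=d+1$, and $w\notin\Nb(w)$) forces $\Nb(w)=\Nb[x]\sm\{w\}$; this set contains $y$, contradicting $w\notin\Nb(y)$. If instead $w\notin\Nb(x)$, then $x\notin\Nb(w)$, so $\Nb(w)\seq\Nb[x]\sm\{x\}=\Nb(x)$, and equality of sizes gives $\Nb(w)=\Nb(x)\ni y$, again a contradiction.

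With $z^*\in\Nb(w)\sm\Nb[x]$ chosen, the five vertices $v,x,y,w,z^*$ are distinct (since $v,x,y\in\Nb[x]$ while $z^*\notin\Nb[x]$, and $w\neq z^*$ as $w\notin\Nb(w)$), and $v$ is a common neighbour of the triangle-vertices $y$ and $w$, so the proposed switch is a valid \dsp\ at $v$. Since $w\in V_1$, we have $z^*\in V_1\cup V_2$. All four affected edges have an endpoint in $V_1$, so no edge with both endpoints in $V_2$ is created or destroyed; and if $z^*\in V_2$ it retains a neighbour in $V_1$ (namely $x$) after the switch, so the layering around $v$ is unaltered and $E_2$ is preserved. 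Finally, $|E_1|$ is unchanged: when $z^*\in V_1$ all four affected edges lie in $E_1$, giving net change $0$, and when $z^*\in V_2$ only $xy$ and $yw$ lie in $E_1$, again net $0$.
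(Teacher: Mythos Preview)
Your proof is correct and follows essentially the same approach as the paper's: both find $w\in V_1$ non-adjacent to $y$ using $\dg(y)\le d-2$, then locate a vertex $z\in\Nb(w)\sm\Nb(x)$ (with $z\neq v,x$) via a cardinality argument split on whether $xw$ is an edge, and perform the \ds\ at $v$ that replaces $xy,wz$ by $yw,xz$. The only difference is presentational---the paper counts $|W|>|X|$ directly while you argue by contradiction that $\Nb(w)\seq\Nb[x]$ forces $y\in\Nb(w)$---but the underlying idea and the switch performed are identical.
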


\begin{proof}
Consider the graph $H=G[V_1\cup V_2]$. Note that all $u\in V_1$ have $\deg_H(u)=d-1$.
Since $\dg(y)=d-1-\od(y)\leq d-2$,
there exists $w\in V_1$ such that $yw\notin E_1$, as in Figure~\ref{fig:figP8n}.
First suppose that $xw\notin E_1$. Let $W=\Nb(w)\sm \{v\}$ and $X=\Nb(x)\sm \{v,y\}$, so $|W|=d-1$, $|X|=d-2$. Thus there exists $z\in W\sm X$. So $wz\in E(H)$ and $xz\notin E(H)$, and there is a switch replacing $xy,wz$ by $yw,xz$. Now $|E_1|$ is unchanged but
$xy$ is a non-edge. If $z\in V_1$ then two edges in $G_1$ are added and two removed. If $z\in V_2$ then one edge of $G_1$ is added and one is removed. No edges of $G_2$ are changed, since only $z$ can be in $V_2$. Finally, if $xw\in E_1$, let $W=\Nb(w)\sm \{v,x\}$ and $X=\Nb(x)\sm \{v,w,y\}$. Then $|W|=d-2$ and $|X|=d-3$. The remainder of the argument proceeds as above.
\end{proof}

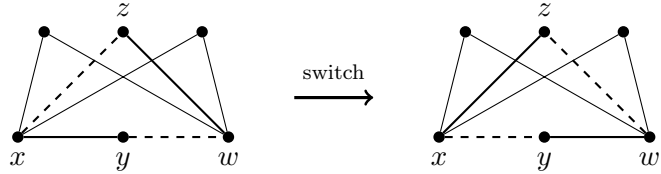
\begin{figure}[H]
\begin{center}
\begin{tikzpicture}[scale=0.7]
\begin{scope}[shift={(0,0)}]
\node (x) [b,label=below:$x$] at (0,0)  {};
\node (y) [b,label=below:$y$] at (2,0)  {};
\node (w) [b,label=below:$w$] at (4,0)  {};
\node (z) [b,label=above:$z$] at (2,2)  {};
\node (z1) [b] at (0.5,2)  {}; \node (z2) [b] at (3.5,2)  {};
\draw[thick] (x)--(y) (w)--(z);
\draw (x)--(z1)--(w) (x)--(z2)--(w);
\draw[thick,dashed] (y)--(w) (x)--(z);
\node at (6,1.25) {\scriptsize  switch} ; 
\draw  (5.25,0.75)edge[line width=1.2pt, ->](6.75,0.75);
\end{scope}
\begin{scope}[shift={(8,0)}]
\node (x) [b,label=below:$x$] at (0,0)  {};
\node (y) [b,label=below:$y$] at (2,0)  {};
\node (w) [b,label=below:$w$] at (4,0)  {};
\node (z) [b,label=above:$z$] at (2,2)  {};
\node (z1) [b] at (0.5,2)  {}; \node (z2) [b] at (3.5,2)  {};
\draw[thick] (y)--(w) (x)--(z);
\draw (x)--(z1)--(w) (x)--(z2)--(w);
\draw[thick,dashed](x)--(y) (w)--(z);
\end{scope}
\end{tikzpicture}
\end{center}
\caption{: The switch in Lemma~\ref{lem:P3}, making $xy$ a non-edge. }
\label{fig:figP8n}
\end{figure}
If $ab$ was a non-edge above the edge $xy$, then both $\od(x),\od(y)\geq 1$, so Lemma\ref{lem:P3} applies.
After performing the \ds\ from Lemma~\ref{lem:P3}, $ab$ will be above the non-edge $xy$. 
Then we can use Lemma~\ref{lem:P2} to re-insert $xy$.

\subsection{$G_2$  has no non-edges}\label{subsec:S2}

As usual, $C$ denotes the component of $G$ containing $v$. We assume that
$|C|\geq 2(d+1)$ and that $G_2$ is complete.
If  (i) $V_4 \ne \es$ and $\ell \ge 2$ or (ii) all vertices of $G_2$ have $d-\ell$ edges to $V_3$, then we can 
use \ds es to create a non-edge in $G_2$.
This is proved in Lemmas~\ref{L2} and~\ref{L3} respectively. If these conditions are not met, then Lemma~\ref{L1} describes a procedure which can be repeated until all vertices of $V_2$ have in-degree one. As a consequence $\ell \ge 2$, and all $u\in V_2$ have $\od(u)=d-(\ell-1)-1=d-\ell$, thus satisfying Lemma~\ref{L3}.

If $\ell \ge d+1$ then $G_2$ necessarily has a non-edge, so  assume $\ell \le d$.
Then $\id(u)\geq 1$, for any $u \in V_2$, and $\dg(u) = \ell-1$ as $G_2$
is complete. Thus
 $\od(u)\leq d-\ell$, so there are at most $\ell(d-\ell)$ edges from $V_2$ to $V_3$. If $\ell=d$ then $V_3=\es$ and so $|C|=2d+1$, a contradiction.
 Similarly, if $|V_3| =1$ then $\ell\geq d$ as $V_4=\emptyset$, and again we
 have a non-edge in $G_2$ or a contradiction. 
 If $|C|=2(d+1)$ then $V_2\cup V_3$ is a $d$-regular subgraph on $d+1$ vertices,
 which must be isomorphic to $\Kd$.  But this contradicts the fact that
 all vertices $u\in V_2$ have $\id(u)\geq 1$.
Hence we may assume that $1 \le \ell \le d-1$, $|C| > 2(d+1)$, and $|V_3| \ge 2$.

\begin{lemma}\label{L1}
Suppose that $d\geq 3$ and $|C| > 2(d+1)$.
Further suppose that $1\leq \ell \leq d-1$ and $|V_3|\geq 2$, with $G_2$ complete and
$V_4=\emptyset$.
If some $u \in V_2$ has $\id(u) \ge 2$ then
there is a \dsp\ which reduces $\id(u)$ by one and moves a vertex of
$V_3$ to $V_2$, without altering $E_1$.
\end{lemma}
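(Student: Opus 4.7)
My plan is to exhibit a specific \dsp\ along a 4-edge path of the form $x_1\,u\,p\,w\,z$, whose central vertex $p$ will lie in $V_2\setminus\{u\}$. Pick $x_1\in V_1\cap\In(u)$, which exists since $\id(u)\geq 2$. Choose $w\in V_3\setminus\Out(u)$, take $p$ to be any vertex of $\In(w)$ (it will be the common neighbour of $u$ and $w$ required by the switch), and take $z$ to be any $V_3$-neighbour of $w$. The switch then deletes $x_1 u$ and $wz$ and inserts $uw$ and $x_1 z$, creating the triangle on $p,u,w$. The new edge $x_1 z$ gives $z$ a neighbour in $V_1$, moving $z$ into $V_2$; meanwhile deleting $x_1 u$ drops $\id(u)$ by exactly one.

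For this construction to succeed, the sets $V_3\setminus\Out(u)$, $\In(w)\setminus\{u\}$, and $\Nb'(w)\cap V_3$ must all be non-empty. Since $|C|>2(d+1)$, we have $|V_3|=|C|-1-d-\ell\geq d+2-\ell$. Since $\id(u)\geq 2$ and $G_2$ is complete, $\od(u)=d-\id(u)-(\ell-1)\leq d-\ell-1$, so $|V_3\setminus\Out(u)|\geq(d+2-\ell)-(d-\ell-1)=3$. Any such $w$ has $\In(w)\neq\es$ by definition of $V_3$, and $u\notin\In(w)$ since $uw\notin E$, so every $p\in\In(w)$ lies in $V_2\setminus\{u\}$ and is adjacent to $u$ by completeness of $G_2$. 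Finally, because $V_4=\es$ and $|\In(w)|\leq \ell-1\leq d-2$, $w$ has at least $d-(\ell-1)\geq 2$ neighbours in $V_3$, any of which can serve as $z$.

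Verification of the switch is then routine. The four path edges $x_1u$, $up$, $pw$, $wz$ are all present (respectively because $x_1\in\In(u)$, $G_2$ is complete, $p\in\In(w)$, $z\in\Nb'(w)$), the pairs $uw$ and $x_1 z$ are non-edges (the former because $w\notin\Out(u)$, the latter because $x_1\in V_1$ and $z\in V_3$), and the five vertices $x_1,u,p,w,z$ are pairwise distinct, lying in layers $V_1,V_2,V_2,V_3,V_3$ with $w\neq z$. Every edge modified by the switch has at least one endpoint in $V_2\cup V_3$, so $E_1$ is untouched. In the new graph, $u$ still has $\id(u)-1\geq 1$ neighbours in $V_1$ and hence remains in $V_2$ with its in-degree reduced by exactly one; and $z$ has acquired $x_1\in V_1$ as a neighbour, so $\dist(v_0,z)=2$ in the new graph and $z$ has moved from $V_3$ to $V_2$, as required. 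The only substantive point is the counting bound guaranteeing $V_3\setminus\Out(u)\neq\es$; the rest is mechanical bookkeeping.
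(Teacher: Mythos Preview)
Your proof is correct. The construction is the same as the paper's in the case $\od(u)=0$, but you handle the case $\od(u)\geq 1$ differently, and in a cleaner way.

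The paper splits into two cases according to whether $\od(u)=0$ or $\od(u)\geq 1$. In the first case it builds the path exactly as you do, with central vertex in $V_2$. In the second case it instead picks the central vertex in $\Out(u)\subseteq V_3$: taking $w'\in\Out(u)$, it compares $|\Nb'(w')\cap V_3|\geq d-\ell$ with $|\Out(u)|\leq d-\ell-1$ to find a non-neighbour of $u$ among the $V_3$-neighbours of $w'$, and then needs the further observation $\ell\leq d-2$ (forced by $u\in B$ and $\id(u)\geq 2$) to guarantee a second $V_3$-neighbour for that vertex. Your argument avoids this case split entirely by invoking the hypothesis $|C|>2(d+1)$ to get $|V_3|\geq d+2-\ell$, which immediately gives $|V_3\setminus\Out(u)|\geq 3$ regardless of $\od(u)$; the central vertex then always sits in $V_2$. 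The trade-off is that the paper's argument never actually uses the bound $|C|>2(d+1)$ (only $|V_3|\geq 2$ and $\ell\leq d-1$), whereas yours leans on it essentially; but since the hypothesis is given, your route is shorter and more uniform.

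One minor point: your bound $|\In(w)|\leq \ell-1$ forces $\ell\geq 2$ (since $\In(w)\neq\es$), so the case $\ell=1$ in the hypothesis is in fact vacuous. This is implicit in your counting, but worth a one-line remark.
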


\begin{proof}
Partition $V_2$ into $A,B$ where $A=\{w\in V_2:\od(w)=0\}$, and $B=V_2\sm A$.
Note that every $x\in V_3$ has a neighbour in $B$, so $1\leq |B|\leq \ell$.

First suppose that $u \in A$, and choose $x\in V_3$. Then $ux$ is a non-edge.
Furthermore, $x$ has a neighbour $y \in V_3$, since $\ell \le d-1$ (and using
the fact that $V_4=\emptyset$).
Let $w$ be any neighbour of $x$ in $B$, and let $z$ be a neighbour of $u$ in $V_1$.
As $ux, yz$ are non-edges, the path $zuwxy$ describes a \dsp\ at $w$.

Now suppose that $u \in B$. Let $uw$ be an edge  to $w \in V_3$.
As $u$ has $\od(u)\leq d-2-(\ell-1)=d-\ell-1$ neighbours in $V_3$ and $w$ has at least
$d-\ell$ neighbours in $V_3$, it follows that $w$ has a neighbour $x \in V_3$ with non-edge $ux$.
As $\id(u) \ge 2$ and $u\notin A$, we have $\ell-1=\dg(u)\leq d-3$, so $\ell \le d-2$. Thus $\dg(x)+\od(x)\geq d-(d-2)=2$,
so $x$ has another neighbour $y \in V_3$.
Let $uz$ be an edge to any neighbour $z$ of $x$ in $V_1$. Then $xu$ is a non-edge by construction, and $yz$ is a non-edge as $y,z$ belong to non-consecutive levels.
Hence $zuwxy$ is a \dsp\ at $w$.

In either case, after making the \dsp\ we have $u,y \in V_2$, $\id(y)=1$, and $\id(u)\gets \id(u)-1$.
\end{proof}
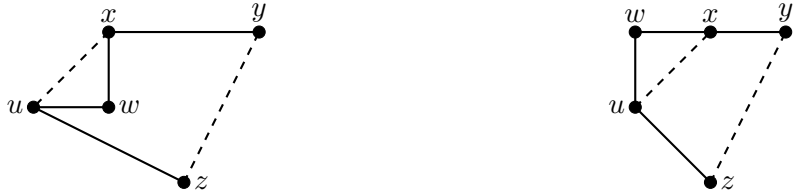
\begin{figure}[H]
\begin{center}
\begin{tikzpicture}[scale=1]
\draw[thick] (2,0)--(0,1)--(1,1)--(1,2)--(3,2);
\draw[thick, dashed] (1,2)--(0,1)  (2,0)--(3,2);
\draw [fill] (1,1) circle (0.08);\draw [fill] (0,1) circle (0.08);
\draw [fill] (2,0) circle (0.08);\draw [fill] (1,2) circle (0.08);
\draw [fill] (3,2) circle (0.08);
\node [right] at (2,0)  {$z$};
\node [right] at (1,1)  {$w$};
\node [above] at (1,2)  {$x$};
\node [above] at (3,2)  {$y$};
\node[left] at (0,1){$u$};

\begin{scope}[shift={(8,0)}]
\draw[thick] (1,0)--(0,1)--(0,2)--(1,2)--(2,2);
\draw[thick, dashed] (1,0)--(2,2) (0,1)--(1,2);
\draw [fill] (1,0) circle (0.08);\draw [fill] (0,1) circle (0.08);
\draw [fill] (0,2) circle (0.08);\draw [fill] (1,2) circle (0.08);
\draw [fill] (2,2) circle (0.08);
\node [right] at (1,0)  {$z$};
\node [left] at (0,1)  {$u$};
\node [above] at (0,2)  {$w$};
\node[above]at(1,2) {$x$};
\node[above]at(2,2) {$y$};
\end{scope}
\end{tikzpicture}
\end{center}
\caption{: Illustration of the edge structure for  the two cases in Lemma~\ref{L1}}
\label{fig:figL1}
\end{figure}

The above process can be repeated until there is a non-edge in $V_2$, in which case we proceed as in Section~\ref{sec:non-edge}, or all vertices on $V_2$ have in-degree one. In this case $\ell \ge 2$, because there must be at least two vertices in $V_1$ with out-degree at least one, or else $G[V_1]=K_d$ and we are done.

Thus we may now assume below that $G_2$ is complete with $2\leq \ell\leq d-1$,
all vertices $u\in V_2$ have $\id(u)=1$, and $V_3\ne \es$ (or else $C$ is a fragment).
Hence all $u\in V_2$ have $\od(u)=d-1-(\ell-1)=d-\ell$, and there are exactly $\ell(d-\ell)$ edges between $V_2$ and $V_3$.

\begin{lemma} \label{L2}
Suppose that $G_2$ is complete and $2\leq \ell\leq d-1$,
all vertices $u\in V_2$ have $\id(u)=1$, and $V_3\ne \es$.
If $V_4 \ne \es$ then we can apply a \dsp\ to create a non-edge in $G_2$ without altering
$E_1$.
\end{lemma}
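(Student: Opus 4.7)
The plan is to exhibit a single $\Delta^+$-switch whose five vertices all lie in $V_2 \cup V_3 \cup V_4$ and whose deletion step removes an edge inside $V_2$. Because no vertex of the switch lies in $V_0 \cup V_1$, the edge set $E_1$ is automatically preserved; moreover the two endpoints of the deleted $V_2$-edge retain their $V_1$-neighbours, so they remain in the new $V_2$ and the deleted edge becomes the required non-edge of the new $G_2$.

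Concretely, I would choose $t \in V_4$ (by hypothesis) and $s \in \Nb(t) \cap V_3$; such an $s$ exists because $t$ lies at distance $4$ from $v$. Next I pick $x \in \Nb(s) \cap V_2$, which exists because $s$ lies at distance $3$, and then any $y \in V_2 \sm \{x\}$, which exists because $\ell \ge 2$; completeness of $G_2$ gives $xy \in E$. Finally I select $z \in \Nb(t) \sm (\{s\} \cup \Nb(y))$. The $\Delta^+$-switch on the 4-edge path $yxstz$ with pivot $s$ then deletes $xy$ and $tz$, inserts $xt$ and $yz$, and creates the triangle $\{s, x, t\}$.

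The routine verifications are: the four required edges $yx, xs, st, tz$ are present (the first by completeness of $G_2$, the rest by construction); $xt$ is a non-edge because $V_2$ and $V_4$ are non-consecutive layers; and the five vertices are pairwise distinct since $x, y \in V_2$ are distinct while $s, t, z$ lie in $V_3 \cup V_4$.

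The one genuinely non-trivial point, which I expect to be the main obstacle, is showing that such a $z$ exists. For this I would argue that $\Nb(t) \seq V_3 \cup V_4$, and since $V_2$ and $V_4$ are non-consecutive layers, any element of $\Nb(y) \cap \Nb(t)$ must lie in $V_3$; therefore $|\Nb(y) \cap \Nb(t)| \le \od(y) = d - \ell$, giving
\[
|\Nb(t) \sm (\{s\} \cup \Nb(y))| \ge (d-1) - (d-\ell) = \ell - 1 \ge 1.
\]
This is the only place where the hypothesis $\ell \ge 2$ is used in an essential way.
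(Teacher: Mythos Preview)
Your proof is correct, and it is in fact cleaner than the paper's own argument. The paper proceeds by a case distinction: if $G[V_4\cup V_5]$ contains an edge, it uses that edge as the ``far'' end of the 4-edge path; if not (so $V_4$ is independent and $V_5=\es$), it runs a separate counting argument on the in-degrees of $S=\Nb(u)\seq V_3$ to locate a suitable configuration. Your approach unifies both cases: by choosing $z\in\Nb(t)\sm(\{s\}\cup\Nb(y))$ and bounding $|\Nb(y)\cap\Nb(t)|\le\od(y)=d-\ell$, you find the required fifth vertex regardless of whether $\Nb(t)$ meets $V_4\cup V_5$ or lies entirely in $V_3$. This is exactly what buys you the simplification, and the hypothesis $\ell\ge2$ is used in the same essential place in both proofs.

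One very small slip: you write that $z$ lies in $V_3\cup V_4$, but since $t\in V_4$, in principle $z$ could lie in $V_5$. This does not affect anything, since $V_5$ is also disjoint from $V_2$ and from $\Nb(y)$, so all the distinctness and non-edge checks go through unchanged.
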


\begin{proof}
If $G[V_4\cup V_5]$ contains an edge then there is a path $uwx$, where $u \in V_3$, $w \in V_4$ and $x \in V_4\cup V_5$.
If $\ell \ge 2$ then there is a path $uyz$ where $y,z \in V_2$. Difference in levels implies $wy, xz$ are non-edges. Then the \dsp\ $xwuyz$ makes $yz$ a non-edge of $G_2$, and does not alter $G_1$.

Otherwise $V_4$ is an independent set, $V_5=\es$ and hence $|V_3| \ge d$.
For a given $u\in V_4$, let $S=\Nb(u)\seq V_3$, so $|S| = d$. The total in-degree of $S$ is at most $\ell(d-\ell)$, so if every vertex $w$ of $S$ has $\id(w)=\ell$ then $|S| \le d-\ell$, a contradiction. Thus there is some $z \in V_2$ and $w\in S$ such that $zw$ is a non-edge. As $\od(z)\leq d-\ell \le d-2$, for some $x \in S$, $x\ne w$, there is an edge $xy$ to some $y \ne z$ in $V_2$. Also $zy\in E_2$, since $G_2$ is complete. Thus $wuxyz$ is a path with $wz, uy$ non-edges, and so the \dsp\ $wuxyz$ makes $yz$ a non-edge of $G_2$, and does not affect~$G_1$.
\end{proof}

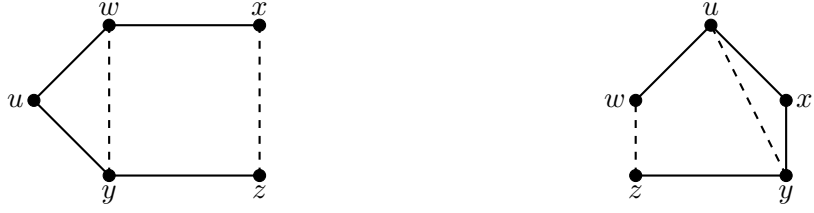
\begin{figure}[H]
\begin{center}
\begin{tikzpicture}[scale=1]
\draw[thick] (1,0)--(0,1)--(1,2)--(3,2) (1,0)--(3,0);
\draw[thick, dashed] (1,0)--(1,2)  (3,0)--(3,2);
\draw [fill] (1,0) circle (0.08);\draw [fill] (3,0) circle (0.08);
\draw [fill] (0,1) circle (0.08);\draw [fill] (1,2) circle (0.08);
\draw [fill] (3,2) circle (0.08);
\node [below] at (1,0)  {$y$};
\node [below] at (3,0)  {$z$};
\node [above] at (1,2)  {$w$};
\node [above] at (3,2)  {$x$};
\node[left] at (0,1){$u$};

\begin{scope}[shift={(8,0)}]
\draw[thick] (0,0)--(2,0)--(2,1)--(1,2)--(0,1);
\draw[thick, dashed] (0,0)--(0,1) (2,0)--(1,2);
\draw [fill] (0,0) circle (0.08);\draw [fill] (2,0) circle (0.08);
\draw [fill] (2,1) circle (0.08);\draw [fill] (1,2) circle (0.08);
\draw [fill] (0,1) circle (0.08);
\node [below] at (0,0)  {$z$};
\node [below] at (2,0)  {$y$};
\node [above] at (1,2)  {$u$};
\node[left]at(0,1) {$w$};
\node[right]at(2,1) {$x$};
\end{scope}
\end{tikzpicture}
\end{center}
\caption{: Illustration of the edge structure for  the two cases in Lemma~\ref{L2}}
\label{fig:figL2}
\end{figure}

\begin{lemma} \label{L3}
Let $d\geq 3$ and $|C|\geq 2(d+1)$.
Suppose that $G_2$ is complete,$2\leq \ell\leq d-1$,
and all $u\in V_2$ have $\id(u)=1$.
Further suppose that $V_3\neq \emptyset$ and $V_4=\es$.
Then there is a \dsm\ in $G[V_2\cup V_3]$ to remove an edge of $V_2$
without altering $E_1$.
\end{lemma}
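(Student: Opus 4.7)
My plan is to construct a \dsm\ inside $H=G[V_2\cup V_3]$ whose triangle edge $xw$ lies in $V_3$ and whose separate edge $yz$ is the $V_2$-edge we wish to destroy. Because $G_2$ is complete on $\ell\ge 2$ vertices, any two distinct $y,z\in V_2$ give $yz\in E_2$, and because the switch requires $xy$ and $wz$ to be non-edges (while $G_2$ is complete), both $x$ and $w$ must lie in $V_3$. Taking $x\in V_3\sm\Nb(y)$ and $w\in V_3\sm\Nb(z)$ makes the two non-edges automatic. A bonus of this setup is that $\Nb_H(x)\cap\Nb_H(w)$ already excludes $y$ and $z$ (since $y\notin\Nb(x)$, $z\notin\Nb(w)$), so any common $H$-neighbour of $x,w$ can serve as the triangle apex $v'$ without further checking.

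First I would strengthen the size bound. The hypothesis $|C|\geq 2(d+1)$ together with $V_4=\es$ gives $|V_3|\geq d+1-\ell$; equality would force every $v\in V_3$ to have $\id(v)=\ell$ (since $\dg(v)\leq|V_3|-1$), contradicting $\sum_{v\in V_3}\id(v)=\ell(d-\ell)$. So $|V_3|\geq d+2-\ell\geq 3$, and for every $u\in V_2$ the set $A_u=V_3\sm\Nb(u)$ has $|A_u|=|V_3|-(d-\ell)\geq 2$. Fix distinct $y,z\in V_2$ and let $X=A_y$, $W=A_z$. Each $x\in X$ has $\id(x)\leq\ell-1$ hence $\dg(x)\geq d-\ell+1$, and since $|V_3\sm W|=d-\ell$, at most $d-\ell$ of $x$'s $V_3$-neighbours can lie outside $W$; consequently $x$ has at least one $V_3$-neighbour $w\in W$.

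For the apex $v'$: the handshake inequality inside $H$ gives $|\Nb_H(x)\cap\Nb_H(w)|\geq 2d-(\ell+|V_3|)$, using that $V_4=\es$ makes $\deg_H(v)=d$ for $v\in V_3$. Combined with $|V_3|\leq\ell(d-\ell)$ (from $\id\geq 1$ on $V_3$), this yields a common neighbour in $H$ whenever $\ell(d-\ell+1)\leq 2d-1$, which holds for $\ell=2$ (always) and for $\ell\geq 3$ with small $d$. In the remaining dense regime ($\ell\geq 3$ with $|V_3|$ large, where $G_3$ may even be triangle-free, e.g.\ isomorphic to $K_{d-\ell,d-\ell}$), I would instead use a $V_2$-apex: the partition of $V_3$ according to its $V_2$-attachment, together with the near-bipartite structure forced on $G_3$ when $|V_3|$ is large, produces some $u'\in V_2\sm\{y,z\}$ (possible since $\ell\geq 3$) whose $V_3$-neighbourhood $\Nb(u')\cap V_3$ of size $d-\ell$ contains an edge straddling $X$ and $W$; taking $v'=u'$ gives a triangle inside $H$. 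The hard part will be the dense-regime bookkeeping: verifying by a pigeonhole across the $\ell$ attachment classes and the two sides of the bipartition of $G_3$ that $u'$ can be chosen distinct from $y,z$ and that $x\in\Nb(u')\cap X$, $w\in\Nb(u')\cap W$ can be chosen with $xw\in E_3$; this reduces to a careful direct count analogous to those in Lemmas~\ref{L1} and~\ref{L2}.

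With $v',x,w,y,z$ all distinct and the required edges/non-edges in place, the \dsm\ deletes $xw$ and $yz$, inserting $xy$ and $wz$; since all five vertices lie in $V_2\cup V_3$, the edge set $E_1$ is unchanged, and $yz$ has become a non-edge of $G_2$, as required.
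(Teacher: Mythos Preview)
Your setup through step~4 is correct and actually somewhat cleaner than the paper's: fixing arbitrary $y,z\in V_2$ and showing that \emph{every} $x\in A_y$ has a $V_3$-neighbour $w\in A_z$ is a nice uniform argument. Your handshake bound (Approach~A) is also fine and genuinely covers $\ell=2$.

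The gap is Approach~B. The ``near-bipartite structure forced on $G_3$'' is not justified---there is no reason $G_3$ should be close to bipartite when $|V_3|$ is large---and the pigeonhole across ``attachment classes and sides of the bipartition'' is not carried out. As written this is a sketch, not a proof.

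More importantly, you are working much too hard in the dense regime. Once you have produced the edge $xw\in E_3$ and the edge $yz\in E_2$ with the two required non-edges $xy,wz$, you are free to place the triangle on \emph{either} side of the \dsm. For $\ell\ge 3$ simply take the apex to be any $v'\in V_2\sm\{y,z\}$: since $G_2$ is complete, $v'y,v'z\in E$, so $v',y,z$ is a triangle, and the \dsm\ with triangle $v'yz$ and separate edge $xw$ deletes $yz,xw$ and inserts $xy,wz$---exactly the same four edges you wanted, with no further hypotheses needed. This is precisely the paper's approach: it puts the triangle inside the complete graph $G_2$ rather than hunting for one that contains the $V_3$-edge $xw$. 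Combining this with your Approach~A for $\ell=2$ gives a complete proof; the elaborate Approach~B can be discarded.
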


\begin{proof}
It follows from the assumptions that there are exactly $\ell(d-\ell)$
edges from $V_2$ to $V_3$, as explained earlier.
Since $V_4=\es$ we have $d-\ell\leq \dg(u)\leq |V_3|-1$ for any $u\in V_3$,
so $|V_3|\geq d-\ell+1$. There are $\ell(d-\ell)$ edges from $V_2$ to $V_3$, so less than $d-\ell$ vertices $u\in V_3$  can have $\id(u)=\ell$.

If some $w \in V_3$ has $\id(w)\leq \ell-2$, then $\id(w)\geq 1$ implies $\ell \ge 3$.
Thus $\dg(w)\geq d-\ell+2$, but at most $d-\ell$ vertices in $\Nb'(w)$ can have in-degree $\ell$.
So $w$ has a neighbour $u$ in $V_3$ with $\id(u) \le \ell-1$. Thus, for some $y \in V_2$, $yu$ is a non-edge.
Also, $w$ has at least one non-neighbour $x\neq y$ in $V_2$, as $\id(w)\leq \ell-2$.
Let $z\in V_2$ be such that $z\neq x,y$. This vertex exists because $\ell\geq3$, and $G$ has a
triangle on the vertices $x,y,z$ because $G_2$ is complete. Now use a \dsm\ on remove $wu$ and the edge $xy$ of triangle $x,y,z$, and insert the edges $wx$, $uy$, as in Figure~\ref{fig:figL3}. This removes
one edge of $G_2$ without changing other edges of $G_2$ or altering $G_1$.

Finally suppose that all vertices of $ V_3$ have in-degree  $\ell-1$ or $\ell$.
Let $|V_3|=d-\ell+h$, where we know $h>0$.
Any $u \in V_2$ has $d-\ell+h-(d-\ell)=h$
non-neighbours in $V_3$. Since each vertex in $V_3$ has at most one non-neighbour in $V_2$,
there are exactly $h\ell$ vertices in $V_3$ with in-degree $\ell-1$, and hence $d-\ell+h-h\ell=d-\ell-h(\ell-1)$ vertices of in-degree $\ell$.

Let $w\in V_3$ have $\id(w)=\ell-1$, and thus exactly one non-edge to some $x \in V_2$.
So $\dg(w)=|\Nb'(w)|=d-\ell+1$.
In $\Nb'(w)$ there are at most $h$ 
vertices which may have a non-edge to $x$, and there are at most $d-\ell-h(\ell-1)$
vertices of in-degree $\ell$. Therefore, since $\ell \ge 2$, $w$ has at least
\[ d-\ell+1-h-(d-\ell-h(\ell-1))=h(\ell-2)+1 > 0 \]
neighbours $u$ of $w$ in $V_3$ with a (unique) non-edge to some $y \ne x$
in $V_2$. Thus, as before, there is a \dsm\ removing $wu$ and $xy$ and inserting edges
$wx$, $uy$, using any triangle $x,y,z$ in $G_2$, as in Figure~\ref{fig:figL3}.
\end{proof}
\begin{figure}[H]
\begin{center}
\begin{tikzpicture}[scale=1]
\draw[thick] (1,0)--(-1,0) (1,2)--(3,2) (1,0)--(3,0) ;
\draw[thick, dashed] (3,0)--(3,2) (1,0)--(1,2) ;
\draw [fill] (1,0) circle (0.08);\draw [fill] (3,0) circle (0.08);
\draw [fill] (-1,0) circle (0.08);\draw [fill] (1,2) circle (0.08);
\draw[thick] (-1,0)edge[bend right=25](3,0);
\draw [fill] (3,2) circle (0.08);
\node [above left] at (1,0)  {$x$};
\node [right] at (3,0)  {$y$};
\node [above] at (1,2)  {$w$};
\node [above] at (3,2)  {$u$};
\node [left] at (-1,0) {$z$};
\end{tikzpicture}
\end{center}
\caption{: Illustration of the \dsm\ in Lemma~\ref{L3}}
\label{fig:figL3}
\end{figure}
After these steps $G_2$ has a non-edge, and we can return to Section~\ref{sec:non-edge}.

\begin{remark}\label{rem:bound}
While our focus is not on the efficiency of the process described in Section~\ref{subsec:strategy},
we can bound the number of \ds es required to create a clique component. 
The steps in Lemmas~\emph{\ref{lem:P0}--\ref{lem:P3}} require $\Theta(1)$ \ds es for each edge inserted in $G_1$, so $\Theta(d^2)$ in total. The steps in Lemmas~\emph{\ref{L1}--\ref{L3}} also  require $O(1)$ \ds es, with the exception of Lemma~\emph{\ref{L1}}, which could possibly be executed $\Theta(d)$ times between edge insertions in $V_1$. Thus the total number of \ds es required is $\Omega(d^2)$ and $O(d^3)$. Note that this is independent of $n$, since at most five layers of $G$ are involved in the process.
\end{remark}

\section{Relabelling the vertices of clique components}\label{sec:sorting}

To complete the proof of Theorem~\ref{thm:irreducible}, we need to show that any graph $X=(V,E_X)\in\Gnd$ can be transformed to any other graph $Y=(V,E_Y)\in\Gnd$ with a sequence of \ds es. We will do this by induction on $n$. It is trivially true for $n=d+1$, since $\Gnd$ contains only one labelled graph, $\Kd$.
We know from Lemma~\ref{small:fragments} that $\Mnd$ is connected for $d+1<n<2(d+1)$. For $n\geq 2(d+1)$, we will assume inductively that $\Mnd[n',d]$ is connected
for all $n'<n$.

Choose any $v\in V$.  First, suppose that $\Nb_X(v)=\Nb_Y(v)$.
We know from Section~\ref{sec:split} that we can
perform a sequence of \ds es to transform $X$ into a graph which is a disjoint
union of
a clique component on the vertex set $\Nb_X[v]$ and a $d$-regular
graph $X'$ with $n-d-1$ vertices. 
Similarly, we can perform a sequence of \ds es to transform $Y$ into a
disjoint union of a clique component on the vertex set $\Nb_Y[v]$
and a $d$-regular graph $Y'$ with $n-d-1$ vertices.
Since $\Nb_X(v) = \Nb_Y(v)$, it follows that $X'$ and $Y'$ have the
same vertex set.

Hence, by induction, there is a sequence of \ds es that transforms $X'$ into $Y'$,
as required.

Now suppose that $\Nb_X(v)\neq \Nb_Y(v)$.  Using the above procedure,
we can assume that $\Nb_X[v]$ spans a copy of $K_{d+1}$, and similarly for $Y$.
We now show how to perform a sequence of switches, starting from $X$, to
ensure that the neighbourhood of $v$ matches $\Nb_Y(v)$.

Let $x\in \Nb_X(v)\sm \Nb_Y(v)$ and $y\in \Nb_X(v)\sm \Nb_Y(v)$. Note that $y$ exists because $|\Nb_X(v)|= |\Nb_Y(v)|$. Since $y\notin \Nb[v]$, it must be the case that $y$ is a vertex of $X'$. Therefore, let $yz$ be any edge of $X'$ incident on $y$, and let $w$ be any vertex of $\Nb(v)\sm\{x\}$, which exists since $d\geq3$. Note that $G$ has a triangle on
the vertices $v,w,x$, since $\Nb_X[v]$ spans a copy of $\Kd$.
Then we may perform a \dsm\ as shown in Figure~\ref{fig:sortingxy}.
\begin{figure}[H]
\begin{center}
\begin{tikzpicture}[scale=1.1]
\node (v) [b,label=below:$v$] at (0,0)  {};
\node (w) [b,label=above:$w$] at (-0.5,1) {};
\node (x) [b,label=above:$x$] at (0.5,1) {};
\node (z) [b,label=right:$z$] at (1.5,1.0) {};
\node (y) [b,label=right:$y$] at (1.5,0.0) {};
\draw (v)--(w)--(x)--(v) (y)--(z);
\draw[dashed] (x)--(z) (y)--(v) ;
\begin{scope}[shift={(7,0)}]
\node at (-2.5,0.85) {\Dm} ; 
\draw  (-3.0,0.4)edge[line width=1.2pt, ->](-2.0,0.4);
\node (v) [b,label=below:$v$] at (0,0)  {};
\node (w) [b,label=above:$w$] at (-0.5,1) {};
\node (x) [b,label=above:$x$] at (0.5,1) {};
\node (z) [b,label=right:$z$] at (1.5,1.0) {};
\node (y) [b,label=right:$y$] at (1.5,0.0) {};
\draw (y)--(v)--(w)--(x)--(z) ;
\end{scope}
\end{tikzpicture}
\end{center}\vspace{1ex}
  \caption{: Swapping $x,y$ in $\Nb_X(v)$}\label{fig:sortingxy}
\end{figure}
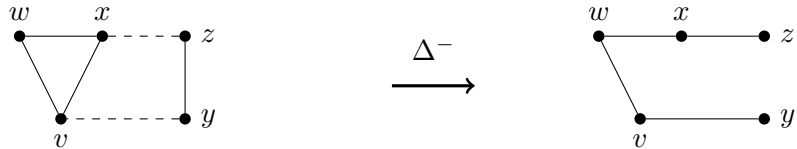

This creates a graph, which we rename as $X$, such that $\Nb_X(v)\gets \Nb_X(v)\sm \{x\}\cup \{y\}$. From this we can perform a sequence of \ds es to create a clique component
with vertex set $\Nb_X[v]$, using the method of Section~\ref{sec:split}.
After this iteration, 
again renaming the new graph as $X$, we find that $\Nb_X[v]$ spans a clique component
and
$|\Nb_X[v]\cap \Nb_Y[v]|$ has been increased by 1. After at most $d$ repetitions of this process, we have reached a new graph $X$ such that $\Nb_X(v)=\Nb_Y(v)$. Now we
may follow the argument given above for that case, completing the proof.

\begin{remark}\label{rem:swapxy1}
It might be more efficient to incorporate this step into the procedure described in
Section~\ref{sec:split}. In particular, we could show that $x,y$ can be interchanged
as soon as $v$ and $x$ have a common neighbour $w$. However, as we only need to
show that $\Mnd$ is connected, and not that we can find shortest paths in
$\Mnd$, we prefer to separate these two steps, for clarity.
\end{remark}

\begin{remark}\label{rem:swapxy2}
We require only one \ds\ to interchange $x,y$, but we may have to repeat this $d$
times.  Since $O(d^3)$ steps are needed to create a clique component
(see Remark~\ref{rem:bound}), this gives $O(d^4)$ steps in total for each
inductive step. This must be repeated in graphs of order $n-i(d+1)$ for
$0\leq i< \rdown{n/(d+1)}\rfloor$, that is, $O(n/d)$ times.
Thus in total we may need $O(nd^3)$ \ds es to connect $X$ with $Y$.

\end{remark}

\section{Regular graphs of degree at most two}\label{sec:d=2}

Theorem~\ref{thm:irreducible} excludes the cases $d=0,1,2$. The switch chain is irreducible in all these cases. We will briefly examine the question of
connectedness of $\Mnd$ (equivalently,
irreducibility of \ds\ chains on $\cG_{n,d}$) when $d=0,1,2$.

If $d=0$ then the unique graph in $\Gnd[n,0]$ is a labelled independent set of
order $n$. Hence $\Mnd[n,0]$ is trivially connected and any \ds\ chain 
is trivially irreducible. If $d=1$ then $G\in\cG_{n,1}$ is a matching and $n$ must be even. 
Now $|\cG_{n,1}| > 1$ when $n\geq 4$ is even,
but clearly no
\ds\ is possible as no element of $\cG_{n,1}$ contains a triangle or a path of
four edges. Thus $\Mnd[n,1]$ is not connected when $n\geq 4$ is even (indeed,
$\Mnd[n,1]$ has no edges in this case).

For $d=2$, it is not so obvious whether 
$\Mnd[n,2]$  is connected.
We will now deal with this, but first we will prove a relevant property of \ds es.

\begin{lemma}\label{permcycle}
Suppose that an induced cycle in a graph $G\in\Gnd$ has at least six vertices.
Then \ds es can be used to permute its vertices arbitrarily.
\end{lemma}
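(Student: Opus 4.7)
The plan is to reduce the lemma to a single basic move: transposing two consecutive vertices of the cycle. This reduction is sound because the adjacent transpositions of a cyclic sequence generate the full symmetric group on the cycle's $k$ vertices, so once any such transposition can be realised by \ds es, every permutation of the cycle vertices follows by composition. It also suffices to build a gadget that acts only on the cycle vertices involved, leaving all other edges of $G$ untouched; this is what lets the argument be indifferent to $d$, provided that the cycle itself is induced.

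For the basic move, I would relabel the cycle as $u_1u_2\ldots u_k$ and try to swap $u_1,u_2$ using two \ds es that touch only the five consecutive cycle vertices $u_k,u_1,u_2,u_3,u_4$. The first is a \dsp\ along the $4$-edge cycle-path $u_4u_3u_2u_1u_k$, which deletes the cycle edges $u_3u_4$ and $u_1u_k$ and inserts $u_1u_3$ and $u_4u_k$; this converts the $k$-cycle into a triangle on $u_1,u_2,u_3$ together with a $(k-3)$-cycle on $\{u_4,\ldots,u_k\}$, linked by the new edge $u_4u_k$. The second is a \dsm\ on that triangle (with $v=u_1$, $x=u_2$, $w=u_3$) and the edge $u_4u_k$ (with $y=u_k$, $z=u_4$): it deletes $u_2u_3$ and $u_4u_k$ and inserts $u_2u_k$ and $u_3u_4$. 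A direct trace of adjacencies should then show that the resulting cycle reads $u_1,u_3,u_4,\ldots,u_{k-1},u_k,u_2$, i.e.\ the original cycle with $u_1$ and $u_2$ transposed. Every modified edge lies inside $\{u_k,u_1,u_2,u_3,u_4\}$, so the external edges of cycle vertices are preserved automatically and the same gadget works for any $d\geq 2$.

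The main obstacle will be checking that both switches are legal, and this is exactly where the induced-cycle hypothesis and the bound $k\geq 6$ enter. The \dsp\ requires $u_1u_3$ and $u_4u_k$ to be non-edges of $G$; since the cycle is induced and $u_1,u_3$ are at cyclic distance $2$ while $u_4,u_k$ are at cyclic distance $\min(4,k-4)$, both are non-edges precisely because $k\geq 6$. The \dsm\ then requires $u_2u_k$ (cyclic distance $2$) and $u_3u_4$ (just deleted by the first switch) to be non-edges at the moment of application, and both conditions follow immediately. Beyond this verification, the only remaining work is routine bookkeeping and the standard fact that the transpositions of cyclically consecutive vertices generate the full symmetric group on the cycle's vertex set.
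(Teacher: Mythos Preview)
Your proposal is correct and follows exactly the paper's approach: realise an adjacent transposition on the cycle by a \dsp\ (splitting off a triangle) followed by a \dsm\ (reattaching it with the two vertices swapped), then appeal to the fact that adjacent transpositions generate the full symmetric group. Your legality checks for the two non-edges $u_1u_3$ and $u_4u_k$ (and then $u_2u_k$, $u_3u_4$) are precisely what the paper leaves implicit in its figure, and they correctly isolate $k\geq 6$ as the threshold; the only cosmetic slip is the phrase ``linked by the new edge $u_4u_k$'', since that edge closes the $(k-3)$-cycle rather than linking it to the triangle.
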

\begin{proof}
Any adjacent transposition can be simulated by a \dsp\ followed by a \dsm, as illustrated in Figure~\ref{fig:cycle}, where $v_1,v_2$ are transposed.
\begin{figure}[H]
\begin{center}
\begin{tikzpicture}[xscale=0.5,yscale=0.7,inner sep=0pt,font=\small]
\node (1) [b,label=left:$v_1\,$] at (-1.4,1) {};
\node (2) [b,label=above:${}_{}v_2$] at (0,2) {};
\node (3) [b,label=above:${}_{}v_3$] at (2,2) {};
\node (4) [b,label=above:${}_{}v_4$] at (4,2) {} ;
\node (n) [b,label=below: $\strut v_n$] at (0,0)  {};
\node (x) at (1.5,0) {}; \node (y) at (5,2)  {};
\draw  (x) -- (n) -- (1) -- (2)-- (3) -- (4) -- (y);
\draw [dashed] (1) -- (3) (4) -- (n) ;
\node at (6.5,1) {\Dp} ; 
\draw  (5.5,0.5)edge[line width=1.2pt, ->](7.25,0.5);
\begin{scope}[shift={(11,0)}]
\node (1) [b,label=left:$v_1\,$] at (-1.4,1) {};
\node (2) [b,label=above:${}_{}v_2$] at (0,2) {};
\node (3) [b,label=above:${}_{}v_3$] at (2,2) {};
\node (4) [b,label=above:${}_{}v_4$] at (4,2) {} ;
\node (n) [b,label=below: $\strut v_n$] at (0,0)  {};
\node (x) at (1.5,0) {}; \node (y) at (5,2)  {};
\draw  (x) -- (n) -- (4) -- (y) (1) -- (2)-- (3) -- (1) ;
\draw [dashed] (3) -- (4) (2) -- (n) ;
\node at (6.5,1) {\Dm} ; 
\draw  (5.5,0.5)edge[line width=1.2pt, ->](7.25,0.5);
\end{scope}
\begin{scope}[shift={(22,0)}]
\node (1) [b,label=left:$v_1\,$] at (-1.4,1) {};
\node (2) [b,label=above:${}_{}v_2$] at (0,2) {};
\node (3) [b,label=above:${}_{}v_3$] at (2,2) {};
\node (4) [b,label=above:${}_{}v_4$] at (4,2) {} ;
\node (n) [b,label=below: $\strut v_n$] at (0,0)  {};
\node (x) at (1.5,0) {}; \node (y) at (5,2)  {};
\draw  (3) -- (4) (x) -- (n) -- (2) -- (1) (4)-- (y)  (3) -- (1) ;
\end{scope}
\end{tikzpicture}
\end{center}\vspace{1ex}
  \caption{: Transposing two vertices on an induced cycle of length at least six}\label{fig:cycle}
\end{figure}
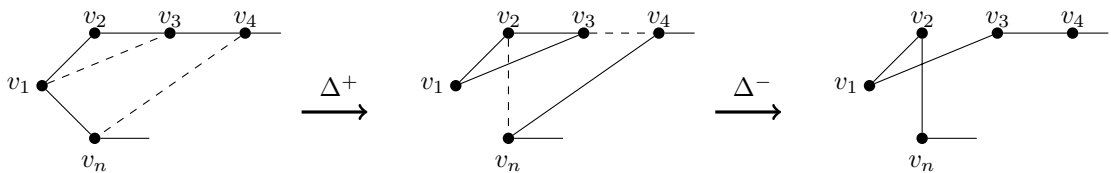
Then all vertices can be permuted as required using bubble-sort.
\end{proof}

If $n<3$ then $\Gnd[n,2]$ is empty. For $n\geq 3$, let $c_i$ ($i=1,2$)
be the number of cycles in $G\in\Gnd[n,2]$ such that their length modulo 3 is $i$.
Then $n\equiv c_1+2c_2 \pmod 3$. We will say $G$ has \emph{class} $(c_1,c_2)$.

If $n\geq 3$ then there is at least one class in $\Gnd[n,2]$.
If $n\equiv i \pmod 3$ ($i\in\{0,1,2\}$), then the class $(i,0)$ exists. Any class is preserved under \ds es, since a cycle can either be increased by length 3 by a \dsp\ or decreased by length 3 by a \dsm. Nothing else is possible. Thus two different classes cannot be connected by \ds es. But any graph in the class can be transformed by \dsm es to a ``canonical'' graph with $c_1$ 4-cycles, $c_2$ 5-cycles and $(n-4c_1-5c_2)/3$ triangles.
Note that there are $(k-1)!/2$ distinct labellings of the vertices of a $k$-cycle.

\begin{lemma}\label{d=2}
The graph $\Mnd[n,2]$ is connected if and only if $n\in\{3,6,7\}$.
Hence any given \ds\ chain is irreducible on $\cG_{n,2}$ if and only if $n\in \{3,6,7\}$.
\end{lemma}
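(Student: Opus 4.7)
The proof naturally splits into disconnectedness (for $n\notin\{3,6,7\}$) and connectedness (for $n\in\{3,6,7\}$), exploiting the invariant $(c_1,c_2)$ just established.

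For disconnectedness, I would handle $n\in\{4,5\}$ directly. Every element of $\cG_{n,2}$ is an $n$-cycle for these $n$, and there are multiple labeled copies. No \ds\ is available: $C_4$ has no triangle and only four vertices (hence no 4-edge path), while the only 4-edge path in $C_5$ closes up so that its endpoints are adjacent, blocking any \dsp. Hence $\Mnd[n,2]$ consists of isolated vertices. For $n\ge 8$ I would exhibit two cycle-partitions of $n$ (each part at least $3$) with distinct classes. Depending on $n\bmod 3$, pair the single-cycle partition $n$ with $4+4+3^{(n-8)/3}$ when $n\equiv 2$, with $4+5+3^{(n-9)/3}$ when $n\equiv 0$, or with $5+5+3^{(n-10)/3}$ when $n\equiv 1$. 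A direct computation shows the classes differ in each case, and invariance of $(c_1,c_2)$ under \ds es places the two graphs in distinct components of $\Mnd[n,2]$.

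For connectedness, the $n=3$ case is trivial. For $n=6$ the only partitions are $6$ and $3+3$, both of class $(0,0)$, so $\cG_{6,2}$ splits into labeled $C_6$'s and labeled $C_3+C_3$'s. I would establish three facts: (i) a single \dsp\ on a 4-edge sub-path of any $C_6$ turns it into some $C_3+C_3$, since the complementary pair is indeed a non-edge in a 6-cycle; (ii) a single \dsm\ using one triangle of a $C_3+C_3$ together with an edge of the other triangle yields some $C_6$; (iii) by Lemma~\ref{permcycle}, \ds es can realize any relabeling of the vertices of a $C_6$. Combining these, every labeled graph in $\cG_{6,2}$ is connected to every other in $\Mnd[6,2]$, routed through some $C_6$. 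The $n=7$ case is entirely analogous with cycle-types $C_7$ and $C_3+C_4$, both of class $(1,0)$: a \dsp\ on $C_7$ produces some $C_3+C_4$, a \dsm\ on a $C_3+C_4$ produces some $C_7$, and Lemma~\ref{permcycle} permutes $C_7$ arbitrarily.

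The main obstacle is the connectedness direction for $n\in\{6,7\}$: carefully specifying the \ds es that convert between the two cycle-types, verifying the non-edge conditions in the 2-regular setting, and then combining this with Lemma~\ref{permcycle} to reach every labeled configuration in the class. The disconnectedness direction is essentially a modular arithmetic exercise once the invariance of $(c_1,c_2)$ is in hand.
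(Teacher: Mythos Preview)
Your proposal is correct and follows essentially the same approach as the paper: the class invariant $(c_1,c_2)$ handles disconnectedness for $n\ge 8$, the absence of any \ds\ on $C_4$ and $C_5$ handles $n\in\{4,5\}$, and Lemma~\ref{permcycle} combined with a single \ds\ between the two cycle-types handles $n\in\{6,7\}$. The only cosmetic differences are that the paper verifies two classes just for $n\in\{8,9,10\}$ and propagates by adding triangles, whereas you write down explicit partitions for all $n\ge 8$; and you should also note that $C_5$ has no triangle, so no \dsm\ is available either (you only ruled out the \dsp).
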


\begin{proof}
Since \ds es do not change the class of a graph, it follows that
$\Mnd[n,2]$ is disconnected if $\cG_{n,2}$ contains
at least two distinct classes. Next observe that if there are two classes in $\cG_{n,2}$, then there are at least two classes in $\cG_{n+3k,2}$ for any $k\geq 0$.
Therefore, if there exists $\nu$ such
that two classes exist in $\cG_{n,2}$ for $n\in \{\nu, \nu+1,\nu+2\}$,
then there are at least two classes in $\cG_{n,2}$ for all $n\geq \nu$.
Taking $\nu=8$, we observe that
\begin{itemize}[itemsep=0pt, topsep=0pt]
\item $\cG_{8,2}$ has two classes: $(0,1)$ and $(2,0)$,
\item $\cG_{9,2}$ has two classes: $(0,0)$ and $(1,1)$,
\item $\cG_{10,2}$ has two classes: $(0,2)$ and $(1,0)$.
\end{itemize}
Thus there are at least two classes in $\cG_{d,2}$ for every $n\geq 8$, and hence
$\Mnd[n,2]$ is not connected for any $n\geq 8$. So no \ds\ chain on
$\Gnd[n,2]$ can be irreducible.

For $3\leq n\leq 7$ there is only one class: when $n=3$ the unique class is $(0,0)$,
for $n= 4$ it is $(1,0)$, for $n= 5$ it is $(0,1)$, for $n= 6$ it is $(0,0)$ and for $n= 7$ it is $(1,0)$. However, this is not enough to ensure that $\Mnd[n,2]$ is connected.
	
If $n=3$ then there is only one labelled cycle, so $\Mnd[3,2]$ is trivially connected.
There are three distinct labellings of a 4-cycle, and no way of
connecting them by \ds es. Thus $\Mnd[4,2]$ is not connected. Similarly,
there are 12 distinct labellings of a 5-cycle, and no possible \ds\
between them, so $\Mnd[5,2]$ is not connected.

However, for $n=6$ and $n=7$, any $G\in \Gnd[n,2]$ is either connected or contains
a triangle. 
So we can use a \dsm\ switch, if necessary, to ensure that the current
graph is an $n$-cycle.
Lemma~\ref{permcycle} then allows us to permute labels arbitrarily on the cycle,
which implies that $\Mnd[6,2]$ and $\Mnd[7,2]$ are both connected. 
That is, any \ds\ chain is irreducible on $\cG_{n,2}$ when $n=6$ or $n=7$.
\end{proof}

We remark that there can be more than two classes in $\cG_{n,2}$ when for $n>10$. For example, $\Gnd[20,2]$ contains the classes $(5,0)$ and $(0,4)$, as well as the two classes $(0,1)$ and $(2,0)$ inherited from $\Gnd[8,2]$.

\end{document}